\numberwithin{equation}{section}
\newtheorem{theorem}{Theorem}[section]
\newtheorem{proposition}[theorem]{Proposition}
\newtheorem{lemma}[theorem]{Lemma}
\newtheorem{corollary}[theorem]{Corollary}
\theoremstyle{definition}
\newtheorem{question}[theorem]{Question}
\theoremstyle{remark}
\numberwithin{equation}{section}
\newcommand{\Z}{\mathbb{Z}}
\newcommand{\Q}{\mathbb{Q}}
\newcommand{\C}{\mathbb{C}}
\newcommand{\Hom}{\mathrm{Hom}}
\newcommand{\map}{\mathrm{map}}
\newcommand{\GL}{\mathrm{GL}}
\newcommand{\hur}{\mathrm{hur}}
\title[The space of commuting elements and classifying spaces]{The space of commuting elements in a Lie group and maps between classifying spaces}
\author[D. Kishimoto]{Daisuke Kishimoto}
\address{Faculty of Mathematics, Kyushu University, Fukuoka, 819-0395, Japan}
\email{kishimoto@math.kyushu-u.ac.jp}
\author[M. Takeda]{Masahiro Takeda}
\address{Institute for Liberal Arts and Sciences, Kyoto University, Kyoto, 606-8316, Japan}
\email{takeda.masahiro.87u@kyoto-u.ac.jp}
\author[M. Tsutaya]{Mitsunobu Tsutaya}
\address{Faculty of Mathematics, Kyushu University, Fukuoka, 819-0395, Japan}
\email{tsutaya@math.kyushu-u.ac.jp}
\date{\today}
\subjclass[2020]{55R37, 57T10}
\keywords{space of commuting elements, Lie group, classifying space}
\begin{document}

\maketitle

\begin{abstract}
  Let $\pi$ be a discrete group, and let $G$ be a compact connected Lie group. Then there is a map $\Theta\colon\Hom(\pi,G)_0\to\map_*(B\pi,BG)_0$ between the null-components of the spaces of homomorphisms and based maps, which sends a homomorphism to the induced map between classifying spaces. Atiyah and Bott studied this map for $\pi$ a surface group, and showed that it is surjective in rational cohomology. In this paper, we prove that the map $\Theta$ is surjective in rational cohomology for $\pi=\Z^m$ and the classical group $G$ except for $SO(2n)$, and that it is not surjective for $\pi=\Z^m$ with $m\ge 3$ and $G=SO(2n)$ with $n\ge 4$. As an application, we consider the surjectivity of the map $\Theta$ in rational cohomology for $\pi$ a finitely generated nilpotent group. We also consider the dimension of the cokernel of the map $\Theta$ in rational homotopy groups for $\pi=\Z^m$ and the classical groups $G$ except for $SO(2n)$.
\end{abstract}

%%%%% Section 1 %%%%%

\section{Introduction}\label{introduction}

Given two topological groups $G$ and $H$, there is a natural map
\[
  \widehat{\Theta}\colon\Hom(G,H)\to\map_*(BG,BH)
\]
sending a homomorphism to its induced map between classifying spaces, where $\Hom(G,H)$ and $\map_*(BG,BH)$ denote the spaces of homomorphisms and based maps, respectively. If $G$ and $H$ are discrete, then the map $\widehat{\Theta}$ in $\pi_0$ is a well-known bijection
\[
  \Hom(G,H)\cong[BG,BH]_*.
\]
However, the map $\widehat{\Theta}$ in $\pi_0$ is not bijective in general. Indeed, if $G=H=U(n)$, then Sullivan \cite{S} constructed a map between classifying spaces, called the unstable Adams operation, which is not in the image of the map $\widehat{\Theta}$ in $\pi_0$, even rationally. Since then, the map $\widehat{\Theta}$ in $\pi_0$ has been intensely studied for both $G$ and $H$ being Lie groups completed at a prime, which led to a new development of algebraic topology and has been producing a variety of applications. See surveys \cite{BLO,G,JMO} for details. Clearly, the map $\widehat{\Theta}$ is of particular importance not only in $\pi_0$. However, not much is known about higher homotopical structures of the map $\widehat{\Theta}$ such as homotopy groups and (co)homology of dimension $\ge 1$.

We describe two interpretation of the map $\widehat{\Theta}$. The first one is from algebraic topology. Stasheff \cite{St} introduced an $A_\infty$-map between topological monoids, which is defined by replacing the equality in the definition of a homomorphism by coherent higher homotopies with respect to the associativity of the multiplications. He also showed that to each $A_\infty$-map, we can assign a map between classifying spaces, and so there is a map
\[
  \mathcal{A}_\infty(G,H)\to\map_*(BG,BH)
\]
where $\mathcal{A}_\infty(G,H)$ denotes the space of $A_\infty$-maps between topological groups $G,H$. It is proved in \cite{F,T} that this map is a weak homotopy equivalence, and since the map $\widehat{\Theta}$ factors through this map, we can interpret that the map $\widehat{\Theta}$ depicts the difference of homomorphisms, solid objects, and $A_\infty$-maps, soft objects, between topological groups.

The second interpretation is from bundle theory. Let $\pi$ be a finitely generated discrete group, and let $G$ be a compact connected Lie group. Let $\Hom(\pi,G)_0$ and $\map_*(B\pi,BG)_0$ denote the path-components of $\Hom(\pi,G)$ and $\map_*(B\pi,BG)$ containing trivial maps, respectively. In this paper, we study the natural map
\[
  \Theta\colon\Hom(\pi,G)_0\to\map_*(B\pi,BG)_0
\]
which is the restriction of the map $\widehat{\Theta}$. If $B\pi$ has the homotopy type of a manifold $M$, then $\Hom(\pi,G)_0$ and $\map_*(B\pi,BG)_0$ are identified with the based moduli spaces of flat connections and all connections on the trivial $G$-bundle over $M$, denoted by $\mathrm{Flat}(M,G)_0$ and $\mathcal{C}(M,G)_0$, respectively. Under this identification, the map $\Theta$ can be interpreted as the inclusion
\[
  \mathrm{Flat}(M,G)_0\to\mathcal{C}(M,G)_0.
\]

Atiyah and Bott \cite{AB} studied the map $\Theta$ for a surface group $\pi$ through the above flat bundle interpretation in the context of gauge theory because flat connections are solutions to the Yang-Mills equation over a Riemann surface. In particular, they used Morse theory to prove that the map $\Theta$ is surjective in rational cohomology whenever $\pi$ is a surface group. However, their proof is so specialized to surface groups that it does not apply to other groups $\pi$. Then we ask:

\begin{question}
  Is the map $\Theta$ surjective in rational cohomology whenever $B\pi$ is of the homotopy type of a manifold?
\end{question}

In this paper, we study the above question in the special case $\pi=\Z^m$. The space $\Hom(\Z^m,G)$ is called the space of commuting elements in $G$ because there is a natural homeomorphism
\[
  \Hom(\Z^m,G)\cong\{(g_1,\ldots,g_m)\in G^m\mid g_ig_j=g_jg_i\text{ for }1\le i,j\le m\}
\]
where we will not distinguish these two spaces. Recently, several results on the space of commuting elements in a Lie group have been obtained from a view of algebraic topology \cite{AC,ACT-G,AGG,Ba,BJS,BR,C,Go,GPS,KT1,KT2,RS1,RS2}. In particular, the first and the second authors gave a minimal generating set of the rational cohomology of $\Hom(\Z^m,G)_0$ when $G$ is the classical group except for $SO(2n)$. Using this generating set, we will prove:

\begin{theorem}
  \label{main 1}
  If $G$ is the classical group except for $SO(2n)$, then the map
  \[
    \Theta\colon\Hom(\Z^m,G)_0\to\map_*(B\Z^m,BG)_0
  \]
  is surjective in rational cohomology.
\end{theorem}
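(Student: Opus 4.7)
The plan is to produce matching generating sets on source and target by means of the universal evaluation map, so that surjectivity of $\Theta^*$ becomes automatic once the source generators are identified with those supplied by the earlier work of the first two authors on the rational cohomology of $\Hom(\Z^m,G)_0$. Write $T^m=B\Z^m$, $H^*(T^m;\Q)=\Lambda(t_1,\ldots,t_m)$, and denote by $t_I$ the exterior monomial indexed by a nonempty $I\subseteq\{1,\ldots,m\}$; let $x_1,\ldots,x_r$ be the polynomial generators of $H^*(BG;\Q)$, the relevant Chern or Pontryagin classes.

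First I would analyze the target. The evaluation map $e\colon T^m\times\map_*(T^m,BG)_0\to BG$ gives, via K\"unneth, an expansion
\[
  e^*(x_j)=\sum_{\emptyset\neq I\subseteq\{1,\ldots,m\}}t_I\otimes\mu_I(x_j),\qquad \mu_I(x_j)\in H^{\deg x_j-|I|}(\map_*(T^m,BG)_0;\Q).
\]
Since $BG$ is rationally equivalent to a product of Eilenberg--MacLane spaces $K(\Q,\deg x_j)$, the space $\map_*(T^m,BG)_0$ is itself rationally a product of Eilenberg--MacLane spaces, and a standard computation shows that the $\mu_I(x_j)$ freely generate $H^*(\map_*(T^m,BG)_0;\Q)$ as a graded commutative $\Q$-algebra.

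Next I would transport these classes through $\Theta$. The triangle $T^m\times\Hom(\Z^m,G)_0\to T^m\times\map_*(T^m,BG)_0\to BG$ (the first arrow being $1\times\Theta$, the second being $e$) computes the holonomy evaluation $\tilde e\colon T^m\times\Hom(\Z^m,G)_0\to BG$ classifying the universal flat $G$-bundle, so
\[
  \tilde e^*(x_j)=\sum_I t_I\otimes\Theta^*\mu_I(x_j).
\]
Setting $\tilde\mu_I(x_j):=\Theta^*\mu_I(x_j)\in H^*(\Hom(\Z^m,G)_0;\Q)$, it is enough to check that the $\tilde\mu_I(x_j)$ generate $H^*(\Hom(\Z^m,G)_0;\Q)$ as an algebra; granted this, every algebra generator $\mu_I(x_j)$ of the target lies in the image of $\Theta^*$, proving surjectivity.

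For this last step I would invoke the minimal generating set for $H^*(\Hom(\Z^m,G)_0;\Q)$ produced by the first two authors for the classical groups other than $SO(2n)$. Their generators are constructed by expanding Chern or Pontryagin classes pulled back from $BG$ along a holonomy evaluation map in the K\"unneth decomposition over $H^*(T^m;\Q)$, which is exactly the construction of $\tilde\mu_I(x_j)$ above; hence, up to a triangular change of basis among the $x_j$, the two lists coincide. The crux of the proof is precisely this identification: one must verify that the previously obtained generating set is truly realized by the universal evaluation classes $\tilde\mu_I(x_j)$, and is not merely an abstractly isomorphic generating set of the correct cardinality and bidegrees. Once this comparison is in hand, surjectivity of $\Theta^*$ drops out of the commutative evaluation diagram.
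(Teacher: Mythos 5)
Your high-level plan is the same as the paper's: produce the free generators of $H^*(\map_*(B\Z^m,BG)_0;\Q)$ by K\"unneth-expanding $e^*$ of the polynomial generators of $H^*(BG;\Q)$, and then show these map under $\Theta^*$ onto the generating set of $H^*(\Hom(\Z^m,G)_0;\Q)$ supplied by [KT1]. But the step you yourself call ``the crux'' --- the identification of $\Theta^*\mu_I(x_j)$ with the [KT1] generators --- is asserted rather than proved, and the assertion as stated is not accurate. The generators $z(d,I)$ (resp.\ $w(d,I)$) from [KT1] are \emph{not} defined by expanding characteristic classes along a holonomy evaluation map on $\Hom(\Z^m,G)_0$; they are defined combinatorially inside $H^*(G/T\times T^m)^W$ as symmetrized monomials, and are transported to $H^*(\Hom(\Z^m,G)_0;\Q)$ via Baird's isomorphism $\Phi^*$ (Theorem \ref{Baird}). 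So the claim that ``the two lists coincide up to triangular change of basis'' is precisely what must be computed, and nothing in your proposal does it.

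The paper's proof (Proposition \ref{Theta(z)}) fills this gap with a genuine calculation. One composes the evaluation-map diagram with $\Phi$: Lemma \ref{Phi} intertwines $\Theta\circ\Phi$ with a map $\widehat{\Phi}$ built from a ``conjugation'' map $\widehat\phi\colon G/T\times BT\to BT$, Lemma \ref{hat-Phi} shows $\widehat\phi^*(x_i)=x_i\times1+1\times x_i$, and Lemma \ref{omega-Phi} computes the evaluation map $BT$-side as $(\omega\circ(\Theta\times1))^*(x_i)=\sum_k y_i^k\times t_k$. Putting these together and expanding $\sum_k(x_k\times1+y_k^1\times t_1+\cdots+y_k^m\times t_m)^i$ yields $\Phi^*\Theta^*(z_{i,I})=\frac{i!}{(i-|I|)!}z(i-|I|+1,I)$ (and the analogous formula for $Sp(n),SO(2n+1)$), so $\Theta^*$ hits each generator from Theorem \ref{generator} up to a nonzero scalar. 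Without some version of this computation, or at least an argument that the [KT1] classes actually arise as evaluation coefficients, your proposal has not established surjectivity; it has only reduced it to the statement it needs to prove. One further small point: you write $T^m=B\Z^m$, which collides with the paper's use of $T^m$ for the $m$-fold product of the maximal torus; keeping these apart matters because both appear in the $\Phi$-diagram.
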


As an application of Theorem \ref{main 1}, we will prove the following theorem. We refer to \cite{HMR} for the localization of nilpotent groups.

\begin{theorem}
  \label{nilpotent group}
  Let $\pi$ be a finitely generated nilpotent group, and let $G$ be the classical group except for $U(1)$ and $SO(2n)$. Then the map
  \[
    \Theta\colon\Hom(\pi,G)_0\to\map_*(B\pi,BG)_0
  \]
  is surjective in rational cohomology if and only if the rationalization $\pi_{(0)}$ is abelian.
\end{theorem}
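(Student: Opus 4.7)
The plan is to reduce to the case $\pi = \Z^m$ from Theorem \ref{main 1} via the rational abelianization $p\colon \pi \to \Z^k$, where $k$ is the rank of $\pi^{\mathrm{ab}}$. First I would establish a key reduction lemma: precomposition with $p$ gives a homeomorphism $\Hom(\Z^k, G)_0 \xrightarrow{\cong} \Hom(\pi, G)_0$. Its essential content is that every homomorphism in the null component has abelian image---indeed, near the trivial map, the generators of $\pi$ have small logarithms $X_i \in \mathfrak{g}$, and the nilpotent relations of $\pi$ force the $X_i$ to span a nilpotent Lie subalgebra of the compact Lie algebra $\mathfrak{g}$; compact Lie algebras admit no non-abelian nilpotent subalgebras (their elements act semisimply under the adjoint), so this subalgebra is abelian. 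An open-and-closed argument propagates this to the whole null component, and the finite torsion in $\pi^{\mathrm{ab}}$ is killed by discreteness. This identification fits into a commutative square involving $\Theta_{\Z^k}$, $\Theta_\pi$, and the precomposition $\mu\colon \map_*(B\Z^k, BG)_0 \to \map_*(B\pi, BG)_0$ induced by $Bp$.

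For the ``if'' direction, if $\pi_{(0)}$ is abelian then $\pi$ is an extension of $\Z^k$ by a finite group $F$, and the fibration $BF \to B\pi \to B\Z^k$ shows $Bp$ is a rational equivalence (as $BF$ is rationally contractible). Hence $\mu^*$ is a rational isomorphism, the diagram identifies $\Theta_\pi^*$ with $\Theta_{\Z^k}^*$, and surjectivity follows from Theorem \ref{main 1}.

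For the ``only if'' direction, assume $\pi_{(0)}$ is non-abelian and produce a class in $H^2(\Hom(\pi, G)_0; \Q)$ outside the image of $\Theta_\pi^*$. Since $\pi$ has nilpotency class at least $2$, a Lie algebra rank argument gives $k \geq 2$, and the five-term exact sequence for $[\pi, \pi] \to \pi \to \pi^{\mathrm{ab}}$ (applied to the class-$2$ quotient $\pi/[\pi, [\pi,\pi]]$) shows that the kernel of $H^2(Bp)\colon H^2(B\Z^k;\Q) \to H^2(B\pi;\Q)$ is non-zero, containing $([\pi,\pi]/[\pi,[\pi,\pi]]) \otimes \Q$. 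Using the rational GEM decomposition $\map_*(X, BG)_{0,\Q} \simeq \prod_{j, i > 0} K(H^{2k_j - i}(X;\Q), i)$ together with a quadratic generator $y \in H^4(BG; \Q)$ (which exists for every classical $G \neq U(1)$), the image of $\mu^*$ on the polynomial generators of $H^2$ arising from $y$ equals the annihilator of $\ker H^2(Bp)$, a proper subspace of $H^2(B\Z^k;\Q)^*$. Combining with Baird's formula $H^*(\Hom(\Z^k, G)_0;\Q) \cong (H^*(G/T) \otimes H^*(T^k))^W$ and Theorem \ref{main 1}, one locates an Atiyah--Bott class $\alpha \in H^2(\Hom(\Z^k, G)_0;\Q) \cong H^2(\Hom(\pi, G)_0;\Q)$ corresponding to a rational commutator not killed in $\pi_{(0)}$, and verifies that $\alpha \notin \Theta_{\Z^k}^*(\mathrm{image}\,\mu^*)$.

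The main obstacle is this last verification: one must show that the chosen Atiyah--Bott class is not hit by $\Theta_{\Z^k}^*$ even modulo $\ker \Theta_{\Z^k}^*|_{H^2}$, which requires controlling all contributions to $H^2(\map_*(B\Z^k, BG)_0; \Q)$---polynomial generators from every generator of $H^*(BG;\Q)$, and exterior products of degree-$1$ GEM classes from $H^{2k_j - 1}(B\Z^k;\Q)$---against the explicit generating set for $H^*(\Hom(\Z^m, G)_0;\Q)$ given by Theorem \ref{main 1}.
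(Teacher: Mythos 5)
Your overall strategy matches the paper's: reduce to $\pi=\Z^m$ via the Bergeron--Silberman equivalence $\Hom(\Z^m,G)_0\simeq\Hom(\pi,G)_0$, use the natural rational splitting of $\map_*(X,BG)_0$ into a product of Eilenberg--MacLane spaces, and characterize abelianness of $\pi_{(0)}$ through (non)injectivity of $\overline{\mathrm{ab}}^*\colon H^2(B\Z^m;\Q)\to H^2(B\pi;\Q)$ (the paper's Lemma \ref{nilpotent cohomology}; your five-term/minimal-model observation is the same fact). The ``if'' direction you give is exactly the paper's.

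But you leave a genuine gap precisely where the work is, and you say so yourself in your last paragraph. The issue is that $QH^2(\map_*(B\Z^m,BG)_0;\Q)$ receives contributions not only from the degree-$4$ generator of $H^*(BG;\Q)$ (equivalently $\pi_4(BG)\otimes\Q$) paired with $H^2(B\Z^m)$, but also from higher Chern/Pontryagin classes $z_i$ paired with $H^{|z_i|-2}(B\Z^m)$. You correctly flag that these must be controlled, but you do not control them. The paper's Proposition \ref{Theta(z)} closes exactly this gap: the explicit formula $\Phi^*\Theta^*(z_{i,I})=\tfrac{i!}{(i-|I|)!}z(i-|I|+1,I)$ (and its $Sp/SO$-analogue) shows $\Theta^*(z_{i,I})=0$ whenever $|I|>i$. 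For a degree-$2$ generator one has $|I|=|z_i|-2$, and for every generator beyond the one in $H^4(BG)$ this exceeds the relevant $i$; hence all those higher contributions die, and the composite $H_2(B\Z^m;\Q)\xrightarrow{\iota}QH^2(\map_*(B\Z^m,BG)_0)\xrightarrow{\Theta^*}QH^2(\Hom(\Z^m,G)_0)$ is precisely an isomorphism, which is what pins the image of $\Theta^*_\pi$ on $QH^2$ down to $\overline{\mathrm{ab}}_*(H_2(B\pi;\Q))$. Without this computation your ``only if'' direction does not go through. Two smaller remarks: your worry about exterior products of degree-$1$ GEM classes is unnecessary---pass to indecomposables $QH^2$, where products vanish, and then observe that surjectivity of $\Theta^*$ on $QH^{\le 2}$ forces surjectivity on $H^2$ because both sides are free commutative algebras on known generators. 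And while your ``logarithm plus open--closed'' sketch of the Bergeron--Silberman equivalence points in the right direction, it is not a proof as written (the open--closed step is not justified); it is cleaner to simply cite \cite{BS} as the paper does.
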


As a corollary, we will obtain:

\begin{corollary}
  \label{nilmanifold}
  Let $M$ be a nilmanifold, and let $G$ be the classical group except for $SO(2n)$. Then the inclusion
  \[
    \mathrm{Flat}(M,G)_0\to\mathcal{C}(M,G)_0
  \]
  is surjective in rational cohomology if and only if $M$ is a torus.
\end{corollary}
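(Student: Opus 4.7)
The plan is to reinterpret the corollary as Theorem \ref{nilpotent group} applied to $\pi=\pi_1(M)$, and then recast the resulting algebraic condition on $\pi$ as a geometric condition on $M$. Any nilmanifold $M$ can be written as $\Gamma\backslash N$ with $N$ a simply connected nilpotent Lie group and $\Gamma$ a cocompact lattice; such an $M$ is aspherical, and by Malcev's theorem $\Gamma=\pi_1(M)$ is a finitely generated torsion-free nilpotent group, so $M\simeq B\Gamma$. Under this equivalence the inclusion $\mathrm{Flat}(M,G)_0\to\mathcal{C}(M,G)_0$ is identified with $\Theta\colon\Hom(\Gamma,G)_0\to\map_*(B\Gamma,BG)_0$ as described in the introduction. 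Since excluding $SO(2n)$ for all $n\ge 1$ in particular excludes $SO(2)\cong U(1)$, Theorem \ref{nilpotent group} applies and tells us that the inclusion is surjective in rational cohomology if and only if $\Gamma_{(0)}$ is abelian.

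It then remains to verify that $\Gamma_{(0)}$ is abelian precisely when $M$ is a torus. One direction is immediate, since the torus $T^m$ has abelian fundamental group $\Z^m$. For the other, I would use from \cite{HMR} that a finitely generated torsion-free nilpotent group $\Gamma$ embeds into its Malcev completion $\Gamma_{(0)}$, so the commutator subgroup $[\Gamma,\Gamma]$ embeds into $[\Gamma_{(0)},\Gamma_{(0)}]$; vanishing of the latter forces $\Gamma\cong\Z^m$ for some $m$. The associated simply connected nilpotent Lie group $N$ then has abelian Lie algebra $\Gamma\otimes_\Z\R=\R^m$, hence $N=\R^m$ and $M=\Z^m\backslash\R^m=T^m$. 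Since all of the substantive topological content has been packaged into Theorem \ref{nilpotent group}, no serious obstacle is anticipated here; the corollary is essentially a bookkeeping consequence combining that theorem with the standard structure theory of nilmanifolds and the rational localization of nilpotent groups.
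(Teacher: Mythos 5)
Your proposal takes the same route as the paper: identify the nilmanifold $M$ with $B\Gamma$ for $\Gamma=\pi_1(M)$ a finitely generated torsion-free nilpotent group, apply Theorem \ref{nilpotent group}, and translate the algebraic condition ``$\Gamma_{(0)}$ abelian'' into ``$M$ is a torus.'' The paper's own proof is a one-liner that simply cites Theorem \ref{nilpotent group}, so you are supplying details it leaves implicit (the Malcev-theoretic argument that $\Gamma_{(0)}$ abelian forces $\Gamma\cong\Z^m$, hence $M\cong T^m$), and this part of your write-up is correct.

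One remark on the $U(1)$ point, which you rightly flag: Theorem \ref{nilpotent group} explicitly excludes $U(1)$, while the corollary's statement only excludes $SO(2n)$. Your reading --- that excluding $SO(2n)$ should be understood as also excluding $U(1)\cong SO(2)$ --- is in fact necessary for the corollary to hold as stated. For $G=U(1)$ the map $\Theta$ is a rational cohomology isomorphism for \emph{every} finitely generated nilpotent $\pi$: since $BU(1)=K(\Z,2)$, the Thom-type equivalence \eqref{Thom} shows $\map_*(B\pi,BU(1))_0$ is rationally a torus of rank $\dim_\Q H^1(B\pi;\Q)$, and $\Hom(\pi,U(1))_0\cong\Hom(\pi^{\mathrm{ab}},U(1))_0$ is a torus of the same rank, with $\Theta$ inducing an isomorphism on $\pi_1\otimes\Q$. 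So the ``only if'' direction would fail for a non-toral nilmanifold if $U(1)$ were admitted. This is a small inconsistency in the paper's statement of the corollary (the theorem lists ``$U(1)$ and $SO(2n)$'' separately, so the corollary arguably should too), but your argument correctly works around it.
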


As a corollary to Theorem \ref{main 1}, we will show that the map $\Theta$ is surjective in rational cohomology for $G=SO(2n)$ with $n=2,3$ (Corollary \ref{SO low rank}). On the other hand, as mentioned above, the result of Atiyah and Bott \cite{AB} implies that the map $\Theta$ is surjective in rational cohomology for $m=2$ and $G=SO(2n)$ with any $n\ge 2$. Then we may expect that the map $\Theta$ is also surjective in rational cohomology for $m\ge 3$ and $n\ge 4$. However, the surjectivity breaks as:

\begin{theorem}
  \label{main 2}
  For $m\ge 3$ and $n\ge 4$, the map
  \[
    \Theta\colon\Hom(\Z^m,SO(2n))_0\to\map_*(B\Z^m,BSO(2n))_0
  \]
  is not surjective in rational cohomology.
\end{theorem}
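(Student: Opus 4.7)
My plan is to compare the rational Poincar\'e series of $\Hom(\Z^m,SO(2n))_0$ and $\map_*(B\Z^m,BSO(2n))_0$ and exhibit a degree where the termwise inequality required for rational surjectivity of $\Theta^*$ fails. First, I would compute the rational Poincar\'e series of $\map_*(B\Z^m,BSO(2n))_0$: since $BSO(2n)$ is rationally equivalent to the product of Eilenberg--MacLane spaces $K(\Q,4i)$ for $1\le i\le n-1$ and $K(\Q,2n)$, the mapping space is rationally a product of Eilenberg--MacLane spaces, and its rational cohomology is a free graded commutative algebra on the slant products $c/\alpha_I$ for $c\in\{p_1,\ldots,p_{n-1},e\}$ and $\emptyset\ne I\subseteq\{1,\ldots,m\}$ with $|I|\le\deg c$, giving $P_{\map}(t)$ as an explicit rational function.

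Next, I would compute the rational Poincar\'e series of $\Hom(\Z^m,SO(2n))_0$ using Baird's decomposition
\[
H^*(\Hom(\Z^m,SO(2n))_0;\Q)\cong\bigl(H^*(SO(2n)/T;\Q)\otimes H^*(T^m;\Q)\bigr)^W,
\]
where $T$ is the maximal torus of rank $n$ and $W=(\Z/2)^{n-1}\rtimes S_n$ is the Weyl group. Molien's formula applied to the graded $W$-representation on the tensor product yields $P_{\Hom}(t)$ in closed form.

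If $\Theta^*$ is rationally surjective, then $\dim H^k(\Hom(\Z^m,SO(2n))_0;\Q)\le\dim H^k(\map_*(B\Z^m,BSO(2n))_0;\Q)$ for every $k$. I would show this fails for $m\ge 3$ and $n\ge 4$ in some degree. The structural reason is that in $H^*(BSO(2n);\Q)$ the Euler class $e$ is a polynomial generator of degree $2n$, strictly less than the degree $4n$ of the analogous generator $p_n$ of $H^*(BSO(2n+1);\Q)$. The slants of $e$ therefore contribute generators to $H^*(\map_*(B\Z^m,BSO(2n))_0;\Q)$ only in the narrow band of degrees $[2n-m,2n-1]$, which is insufficient to cover all Weyl invariants on the $\Hom$-side that are built from products of the Pfaffian $t_1\cdots t_n$ with anticommuting combinations of the $y$-variables in $H^*(T^m;\Q)$.

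The hardest step is the explicit Poincar\'e-series comparison, as the Molien computation for $W=(\Z/2)^{n-1}\rtimes S_n$ is combinatorially intricate. My approach is to first verify the strict inequality numerically in the base case $m=3,n=4$, and then propagate it to all $m\ge 3$ and $n\ge 4$ using the restriction maps induced by $\Z^{m-1}\hookrightarrow\Z^m$ and $SO(2n-2)\hookrightarrow SO(2n)$, which relate the Poincar\'e series of adjacent cases.
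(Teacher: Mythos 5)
Your strategy---comparing Poincar\'e series and locating a degree where $\dim H^k(\Hom) > \dim H^k(\map_*)$---is logically sound as a sufficient criterion for non-surjectivity, and your structural intuition (the Pfaffian is the culprit) is correct. But there is a genuine gap: you never verify that the Poincar\'e series inequality actually fails, and it is not at all clear that it does. Surjectivity of $\Theta^*$ implies the termwise inequality, but the converse is false, so it is perfectly possible for $\Theta^*$ to be non-surjective while $\dim H^k(\Hom)\le\dim H^k(\map_*)$ holds in every degree. In that case your approach would produce no contradiction at all, and you give no evidence (numerical or otherwise) that $m=3$, $n=4$ avoids this outcome. The paper's proof is immune to this problem: instead of counting dimensions, it constructs an explicit $W$-invariant $a=\pi(\bar a)$ with $\bar a=x_1\cdots x_{n-4}y_{n-3}^1y_{n-2}^2y_{n-1}^3y_n^1y_n^2y_n^3$, shows $(\alpha\times 1)^*(a)$ is indecomposable (Lemma~\ref{a}), and then shows it cannot lie in the image of $\Theta^*\circ\Phi^*$ because, by Proposition~\ref{H(map)} and the computation in Lemma~\ref{Phi}, every indecomposable in the image of $\widehat{\Phi}^*$ has total $y$-degree bounded by $m$, whereas $\bar a$ has total $y$-degree $6$. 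This exhibits a concrete class outside the image regardless of whether the Betti numbers of the two sides happen to be comparable.

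A secondary gap is the ``propagation'' step. Restricting along $\Z^{m-1}\hookrightarrow\Z^m$ is legitimate because $\Z^{m-1}$ is a retract of $\Z^m$, and the paper uses exactly this to go from $m=3$ to general $m$. But $SO(2n-2)\hookrightarrow SO(2n)$ admits no such retraction, so the restriction map induces no termwise inequality of Poincar\'e series between the $n$ and $n+1$ cases; you would have to redo the Molien computation for every $n$. The paper avoids this entirely: the element $\bar a$ is defined uniformly in $n$ by padding with $x_1\cdots x_{n-4}$, and the degree-counting obstruction is uniform as well.
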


We will also consider the map $\Theta$ in rational homotopy groups. It is proved in \cite{KT1} that $\Hom(\Z^m,G)_0$ is rationally hyperbolic, and so the total dimension of its rational homotopy groups is infinite. On the other hand, we will see in Section \ref{rational homotopy groups} that the rational homotopy group of $\map_*(B\Z^m,BG)_0$ is finite dimensional. Then the map $\Theta$ for $\pi=\Z^m$ cannot be injective. On the other hand, we can consider the surjectivity of the map $\Theta$ in rational homotopy groups by looking at its cokernel. Baird and Ramras \cite{BR} gave a lower bound for the dimension of the cokernel of the map $\Theta$ for $G=\GL_n(\C)$ in rational homotopy groups. In particular, for $\pi=\Z^m$, they proved that the dimension of the cokernel of the map
\[
  \Theta_*\colon\pi_i(\Hom(\Z^m,\GL_n(\C))_0)\otimes\Q\to\pi_i(\map_*(B\Z^m,B\GL_n(\C))_0)\otimes\Q
\]
is bounded below by $\sum_{i<k\le n}\binom{m}{2i-k}$ whenever $n\ge\frac{m+i}{2}$. By using Theorem \ref{main 1}, we can improve this result as follows.

\begin{theorem}
  \label{main 3}
  Let $c_i(m,G)$ be the dimension of the cokernel of the map
  \[
    \Theta_*\colon\pi_i(\Hom(\Z^m,G)_0)\otimes\Q\to\pi_i(\map_*(B\Z^m,BG)_0)\otimes\Q.
  \]

  \begin{enumerate}
    \item For $G=U(n),SU(n)$, we have
    \[
      c_i(m,G)=\sum_{i<k\le n}\binom{m}{2i-k}.
    \]

    \item For $G=Sp(n),SO(2n+1)$, we have
    \[
      c_i(m,G)\ge\sum_{\frac{i}{3}<k\le n}\binom{m}{4i-k}
    \]
    where the equality holds for $i\le 2n+3$.
  \end{enumerate}
\end{theorem}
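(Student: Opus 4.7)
The plan is to analyze $\Theta_*$ via minimal Sullivan models, exploiting that the target is rationally a generalized Eilenberg--MacLane space. Since $BG$ is rationally the product $\prod_{k=1}^n K(\Q,n_k)$ with $n_k=2k$ for $G=U(n),SU(n)$ and $n_k=4k$ for $G=Sp(n),SO(2n+1)$, the target $Y=\map_*(B\Z^m,BG)_0$ decomposes rationally as a product of Eilenberg--MacLane spaces, giving
\[
\pi_i(Y)\otimes\Q\cong\bigoplus_k\widetilde H^{n_k-i}(T^m;\Q),
\]
whose total dimension is $\sum_k\binom{m}{n_k-i}$. Dually, $\mathrm{Prim}^i H^*(Y;\Q)$ has a canonical basis of slant primitives $c_k^{(I)}$ (or $p_k^{(I)}$ in the $Sp(n)/SO(2n+1)$ cases) indexed by pairs $(k,I)$ with $|I|=n_k-i$ and $1\le|I|\le m$, $1\le k\le n$.

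The key observation is that, since $Y$ is a rational GEM and $X=\Hom(\Z^m,G)_0$ is nilpotent, the cokernel of $\Theta_*$ on $\pi_i\otimes\Q$ is linearly dual to the kernel of
\[
\mathrm{Prim}^i H^*(Y;\Q)\xrightarrow{\Theta^*}H^i(X;\Q)\twoheadrightarrow H^i(X;\Q)/D^i,
\]
where $D^i\subseteq H^i(X;\Q)$ denotes the subspace of cohomologically decomposable classes. The minimal Sullivan model of $Y$ is the free graded commutative algebra on its primitives with zero differential, and by minimality of the Sullivan model $M_X$, any cocycle representing a decomposable cohomology class already lies in $(M_X^{>0})^2$ modulo a boundary (itself again decomposable by minimality), hence projects to zero in the indecomposables $QM_X^i=(\pi_i(X)\otimes\Q)^*$.

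To compute this kernel I would identify $\Theta^*$ on primitives using the adjoint $T^m\times X\to BG$ classifying the universal flat bundle, combined with the Baird-type isomorphism $H^*(X;\Q)\cong[H^*(G/T;\Q)\otimes H^*(T^{nm};\Q)]^W$ (valid for the classical groups except $SO(2n)$). Expanding the Chern or Pontryagin classes of the universal bundle as elementary symmetric polynomials in Chern roots $\ell_j=y_j+\sum_b x_b y_{j,b}$ produces each $\Theta^*(c_k^{(I)})$ as an explicit $W$-invariant polynomial, from which one reads off when the image lies in $D^i$. Combined with the minimal generating set of $H^*(X;\Q)$ from Kishimoto--Takeda \cite{KT1}, this yields the combinatorial count matching the claimed formulas.

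The main obstacle will be establishing the full equality (rather than only the Baird--Ramras lower bound) for $G=U(n),SU(n)$, and the refined range $i\le 2n+3$ for $G=Sp(n),SO(2n+1)$. While Theorem~\ref{main 1} guarantees that $\Theta^*$ is surjective in rational cohomology and so constrains the possible decomposability, one must still verify combinatorially that the KT1 generators account for every decomposable image and that no hidden multiplicative relations in the Pontryagin-class expansion contribute unanticipated kernel elements in the target range; the breakdown of equality beyond $i=2n+3$ is presumably traceable to interactions among the higher-degree Pontryagin generators producing additional primitives that fall into $D^i$ unaccounted for by the simple count.
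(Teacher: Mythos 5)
Your overall strategy---compare indecomposables, identify $\pi_*(Y)\otimes\Q$ for the GEM target $Y=\map_*(B\Z^m,BG)_0$, expand the universal characteristic classes, and count generators combinatorially---is essentially the paper's, and the explicit expansion you propose for $\Theta^*$ on the classes $z_{k,I}$ would agree with Proposition~\ref{Theta(z)}. But the ``key observation'' driving your reduction has a real gap. You claim that the cokernel of $\Theta_*$ on $\pi_i\otimes\Q$ is linearly dual to $\ker\bigl(\mathrm{Prim}^iH^*(Y)\to H^i(X)/D^i\bigr)$. What is true is that it is dual to $\ker\bigl((\Theta_*)^*\colon\Hom(\pi_iY,\Q)\to\Hom(\pi_iX,\Q)\bigr)$, and, via $\mathrm{Prim}^iH^*(Y)\cong QH^i(Y)\cong\Hom(\pi_iY,\Q)$, that kernel \emph{contains} $\ker\bigl(QH^i(Y)\to QH^i(X)\bigr)$ but is equal to it only when the dual Hurewicz map $\hur^*\colon QH^i(X)\to\Hom(\pi_iX,\Q)$ is injective on the image of $\Theta^*$. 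Your minimality argument shows only that a cocycle representing a decomposable cohomology class is decomposable in $M_X$, i.e.\ that $\hur^*$ is \emph{well defined} on $QH^i(X)$; it does not rule out an indecomposable cohomology class being represented by a cocycle that is decomposable in $M_X$ (a product $ab$ with $a$ or $b$ not closed), which is exactly how $\hur^*$ fails to be injective. This failure mode is active here: $\Hom(\Z^m,G)_0$ is rationally hyperbolic and its minimal model does not coincide with its cohomology beyond a range, so your ``key observation'' gives only a lower bound (recovering Baird--Ramras), not the stated equalities.

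This is precisely the content the paper must supply beyond your outline, and it also explains the $i\le 2n+3$ restriction that you speculate about incorrectly. The paper proves Lemma~\ref{n+1}: if $X$ is rationally a product of Eilenberg--MacLane spaces through dimension $N$, then $\hur^*\colon QH^i(X)\to\Hom(\pi_iX,\Q)$ is injective for $i\le N+2$; Theorem~\ref{low dimension} (from \cite{KT1}) supplies exactly such a splitting through degree $d(m,G)$, giving Lemma~\ref{generator pi(G)} and hence the range $i\le d(m,G)+2=2n+3$ for $Sp(n),SO(2n+1)$. For $U(n),SU(n)$ the paper then removes the range restriction by an induction on $m$ using the retraction $\pi_I^*\circ\iota_I^*=1$ and the fact that the only generators not detected by a proper $I\subsetneq[m]$ are the $z(d,[m])$, which live in degree $\le d(m,G)+2$ (Lemma~\ref{generator pi(U)}). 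Your closing paragraph attributes the obstacle to ``hidden multiplicative relations in the Pontryagin-class expansion''; that is a misdiagnosis---the expansion and the count of $\Q\{\mathcal{S}_i\}$ versus $\Q\{\mathcal{S}_i(m,G)\}$ are unproblematic, while the delicate step is the injectivity of the dual Hurewicz map for $\Hom(\Z^m,G)_0$, for which your proposal currently offers no mechanism.
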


Remarks on Theorem \ref{main 3} are in order. By \cite{GPS}, $\pi_1(\Hom(\Z^m,G)_0)$ is abelian, and it is easy to see that $\pi_1(\map_*(B\Z^m,BG))$ is abelian too. Then $\pi_1\otimes\Q$ in Theorem \ref{main 3} makes sense. By \cite{Be}, the $G=U(n)$ case is equivalent to the $G=\GL_n(\C)$ case, and so we can see that the lower bound of Baird and Ramras \cite{BR} for $\pi=\Z^m$ mentioned above is attained by Theorem \ref{main 3}.

\subsection{Acknowledgements}

The authors were support in part by JSPS KAKENHI JP17K05248 and JP19K03473 (Kishimoto), JP21J10117 (Takeda), and JP19K14535 (Tsutaya).

%%%%% Section 2 %%%%%

\section{The map $\Phi$}\label{the map}

Hereafter, let $G$ be a compact connected Lie group with maximal torus $T$, and let $W$ denote the Weyl group of $G$. We define a map
\[
  \Phi\colon G/T\times_W T^m\to\Hom(\Z^m,G)_0
\]
by $\Phi(gT,(g_1,\ldots,g_m))=(gg_1g^{-1},\ldots,gg_mg^{-1})$ for $g\in G$ and $g_1,\ldots,g_m\in T$, where $T^m$ denotes the direct product of $m$ copies of $T$, instead of a torus of dimension $m$. In this section, we will define maps involving the map $\Phi$ and show their properties.

First, we recall the following result of Baird \cite{Ba}. It is well known that there is a natural isomorphism
\[
  H^*(G/T\times_W T^m;\Q)\cong H^*(G/T\times T^m;\Q)^W
\]
and so we will not distinguish them. Baird \cite{Ba} proved:

\begin{theorem}
  \label{Baird}
  The map
  \[
    \Phi^*\colon H^*(\Hom(\Z^m,G)_0;\Q)\to H^*(G/T\times T^m;\Q)^W
  \]
  is an isomorphism.
\end{theorem}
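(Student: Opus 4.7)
The plan is to prove that $\Phi^*$ is an isomorphism by showing that $\Phi$ has rationally acyclic fibers, invoking the Vietoris--Begle mapping theorem, and then identifying the rational cohomology of the $W$-quotient $G/T\times_W T^m$ with the Weyl invariants in $H^*(G/T\times T^m;\Q)$.

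First I would establish that $\Phi$ is surjective. Since the identity component $\Hom(\Z^m,G)_0$ is characterized by the property that every commuting tuple in it can be simultaneously conjugated into the maximal torus $T$, every $(g_1,\ldots,g_m)$ in the target equals $\Phi([gT,(t_1,\ldots,t_m)])$ for suitable $g\in G$ and $t_i\in T$.

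Next I would analyze the fibers of $\Phi$. For $\bar g=(g_1,\ldots,g_m)\in\Hom(\Z^m,G)_0$, set $A=\overline{\langle g_1,\ldots,g_m\rangle}$ and $Z=Z_G(A)^\circ$. Unwinding the definitions, $\Phi^{-1}(\bar g)$ is naturally homeomorphic to the space of maximal tori of $G$ containing $A$. Because $\bar g$ lies in some maximal torus, $A$ sits inside the center of $Z$, so every maximal torus of $Z$ contains $A$. Hence the fiber is identified with the space of maximal tori of $Z$, namely the homogeneous space $Z/N_Z(T')$ for any maximal torus $T'$ of $Z$. Since $H^*(Z/T';\Q)$ is the regular representation of the Weyl group $W_Z=N_Z(T')/T'$, its $W_Z$-invariants reduce to a copy of $\Q$ in degree zero, so every fiber of $\Phi$ is rationally acyclic.

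Because $\Phi$ is a proper surjection between compact Hausdorff spaces whose fibers are all rationally acyclic, the Vietoris--Begle mapping theorem (or equivalently the collapse of the Leray spectral sequence) produces an isomorphism
\[
  \Phi^*\colon H^*(\Hom(\Z^m,G)_0;\Q)\xrightarrow{\cong} H^*(G/T\times_W T^m;\Q).
\]
Composing with the standard finite-group-quotient identification $H^*(G/T\times_W T^m;\Q)\cong H^*(G/T\times T^m;\Q)^W$ finishes the proof. The main obstacle will be the fiber analysis: one must verify carefully that the set-theoretic identification of $\Phi^{-1}(\bar g)$ with the space of maximal tori of $Z$ is a genuine homeomorphism, and that the fibers vary nicely enough as $\bar g$ moves through the stratification of $\Hom(\Z^m,G)_0$ by orbit type for Vietoris--Begle to apply uniformly.
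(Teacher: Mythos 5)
The paper cites this result from Baird \cite{Ba} and gives no proof, so there is no internal argument to compare against; what follows is an assessment of your outline on its own terms, measured against Baird's actual proof. Your strategy matches Baird's: identify the fiber of $\Phi$ over a tuple $\bar g$ with the variety of maximal tori of $Z=Z_G(A)^\circ$, where $A$ is the closure of the subgroup generated by $\bar g$; observe that this is rationally acyclic because $H^*(Z/T';\Q)$ is the regular $\Q[W_Z]$-module, so $H^*(Z/N_Z(T');\Q)\cong H^*(Z/T';\Q)^{W_Z}\cong\Q$; and conclude by a Vietoris--Begle/Leray argument. Baird packages the last step through $G$-equivariant cohomology rather than invoking Vietoris--Begle for the underlying map, but the geometric content (surjectivity plus rational acyclicity of point-inverses) is the same, and your direct formulation is perfectly legitimate. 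Two points you currently assert too casually. First, the surjectivity of $\Phi$ onto $\Hom(\Z^m,G)_0$ --- that every commuting tuple in the identity component lies in a common maximal torus --- is not a definition of the identity component but a nontrivial theorem (proved in \cite{Ba}); only the inclusion ``image of $\Phi$ lies in $\Hom(\Z^m,G)_0$'' is formal, and the reverse inclusion needs a proof or a citation. Second, Vietoris--Begle is a statement about \v{C}ech (equivalently sheaf) cohomology, so you should record that both $G/T\times_W T^m$ (a finite quotient of a closed manifold) and $\Hom(\Z^m,G)_0$ (a compact real algebraic set) have the homotopy type of CW complexes, whence \v{C}ech and singular cohomology coincide. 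On the other hand, the worry in your final sentence that the fibers must ``vary nicely'' for Vietoris--Begle to apply is unfounded: for a closed surjection between compact Hausdorff spaces, pointwise rational acyclicity of the fibers is all that is needed, with no local triviality or stratification hypotheses.
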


By using Theorem \ref{Baird}, the first and the second authors \cite{KT1} gave a minimal generating set of the rational cohomology of $\Hom(\Z^m,G)_0$ when $G$ is the classical group except for $SO(2n)$, which we recall in the next section.

In order to define maps involving the map $\Phi$, we need the functoriality of classifying spaces. Then we employ the Milnor construction \cite{M} as a model for the classifying space. Let
\[
  EG=\lim_{n\to\infty}\underbrace{G*\cdots*G}_n
\]
where $X*Y$ denotes the join of spaces $X$ and $Y$. Following Milnor \cite{M}, we denote a point of $EG$ by
\[
  t_1g_1\oplus t_2g_2\oplus\cdots
\]
such that $t_i\ge 0$, $\sum_{n\ge 1}t_n=1$ with only finitely many $t_i$ being non-zero, and $s_1g_1\oplus s_2g_2\oplus\cdots=t_1h_1\oplus t_2h_2\oplus\cdots$ if $s_k=t_k=0$ ($g_k\ne h_k$, possibly) and for $i\ne k$, $s_i=t_i$ and $g_i=h_i$, where $e$ denotes the identity element of $G$. Then $G$ acts freely on $EG$ by
\[
  (t_1g_1\oplus t_2g_2\oplus\cdots)\cdot g=t_1g_1g\oplus t_2g_2g\oplus\cdots.
\]
We define the classifying space of $G$ by
\[
  BG=EG/G.
\]

Note that the inclusion $ET\to EG$ induces a map $\iota\colon BT\to EG/T$ which is a homotopy equivalence because both $ET$ and $EG$ are contractible. We record a simple fact which follows immediately from the definition of the Milnor construction.

\begin{lemma}
  \label{BT->BG}
  The natural map $BT\to BG$ factors as the composite
  \[
    BT\xrightarrow{\iota}EG/T\to EG/G=BG.
  \]
\end{lemma}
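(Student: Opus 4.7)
The plan is to unwind the definitions in the Milnor construction and observe that the factorization is essentially tautological. The whole content is that the map $\iota$ and the quotient $EG/T \to EG/G$ assemble into a commutative diagram realizing the natural map $BT \to BG$.

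First I would make the natural map $BT\to BG$ explicit on the level of total spaces. The inclusion $T\hookrightarrow G$ induces a map $j\colon ET\to EG$ sending a formal sum
\[
  t_1g_1\oplus t_2g_2\oplus\cdots\quad(g_i\in T)
\]
to the same formal sum viewed as a point of $EG$, where each $g_i$ is now regarded as an element of $G$. Since the $T$-action on $EG$ is by definition the restriction of the $G$-action (given by right multiplication in every coordinate), the map $j$ is $T$-equivariant.

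Second, I would take quotients. Passing $j$ to $T$-quotients gives a map $ET/T\to EG/T$; comparing with the definition of $\iota$ in the paragraph preceding the lemma shows that this coincides with $\iota$. Since the $T$-action on $EG$ is the restriction of the $G$-action, there is a further canonical projection $EG/T\to EG/G=BG$. Composing, I obtain
\[
  BT=ET/T\xrightarrow{\iota}EG/T\longrightarrow EG/G=BG.
\]

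Finally, I would check that this composite equals the natural map $BT\to BG$ induced by $T\hookrightarrow G$. By the naturality of the Milnor construction, that map is by definition the map induced by $j$ after passing to $T$-quotients on the source and $G$-quotients on the target, which is visibly the composite displayed above. There is no genuine obstacle in the argument; the lemma is a direct consequence of the definition of $EG$ and the compatibility of the $T$- and $G$-actions, which is why the authors state it as a fact that follows immediately from the construction.
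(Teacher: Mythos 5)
Your argument is correct and is essentially the one the paper intends: the lemma is stated without proof as an immediate consequence of the Milnor construction, and your unwinding of the definitions — $T$-equivariance of $j\colon ET\to EG$, passing to $T$-quotients to recover $\iota$, then further projecting to $G$-quotients — is exactly the tautological verification. Nothing more is needed.
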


Now we define a map
\[
  \phi\colon G/T\times BT\to BG,\quad(gT,[t_1g_1\oplus t_2g_2\oplus\cdots])\mapsto[t_1gg_1\oplus t_2gg_2\oplus\cdots]
\]
for $g\in G$ and $[t_1g_1\oplus t_2g_2\oplus\cdots]\in BT$. Since $T$ is abelian, we have
\[
  [t_1ghg_1\oplus t_2ghg_2\oplus\cdots]=[t_1gg_1h\oplus t_2gg_2h\oplus\cdots]=[t_1gg_1\oplus t_2gg_2\oplus\cdots]
\]
 for $h\in T$, implying that the map $\phi$ is well-defined. We also define
\[
  \bar{\alpha}\colon G/T\to EG/T,\quad gT\mapsto[1g\oplus 0e\oplus 0e\oplus\cdots].
\]
Let $\alpha$ denote the composite $G/T\xrightarrow{\bar{\alpha}}EG/T\xrightarrow{\iota^{-1}}BT$. Then there is a homotopy fibration $G/T\xrightarrow{\alpha}BT\to BG$.

\begin{lemma}
  \label{phi}
  There is a map $\widehat{\phi}\colon G/T\times BT\to BT$ satisfying the homotopy commutative diagram
  \[
    \xymatrix{
      G/T\vee BT\ar[r]\ar[d]^{\alpha\vee 1}&G/T\times BT\ar@{=}[r]\ar[d]^{\widehat{\phi}}&G/T\times BT\ar[d]^\phi\\
      BT\ar@{=}[r]&BT\ar[r]&BG.
    }
  \]
\end{lemma}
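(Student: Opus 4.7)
The plan is to exploit Lemma \ref{BT->BG} and build $\widehat{\phi}$ as the composite $G/T \times BT \xrightarrow{\widetilde{\phi}} EG/T \xrightarrow{\iota^{-1}} BT$, where $\widetilde{\phi}$ is defined by the same formula as $\phi$ but terminating in $EG/T$ rather than in the further quotient $EG/G = BG$. Explicitly, I would set
\[
  \widetilde{\phi}(gT,[t_1 g_1 \oplus t_2 g_2 \oplus \cdots]) = [t_1 g g_1 \oplus t_2 g g_2 \oplus \cdots] \in EG/T,
\]
and take $\widehat{\phi} = \iota^{-1} \circ \widetilde{\phi}$; the homotopy inverse $\iota^{-1}$ exists because $EG$ is contractible, so $\iota\colon BT \to EG/T$ is a homotopy equivalence.

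The first thing to verify is that $\widetilde{\phi}$ is well-defined. Replacing $g$ by $gh$ with $h \in T$ yields the representative $[t_1 gh g_1 \oplus t_2 gh g_2 \oplus \cdots]$; commuting $h$ past each $g_i \in T$, which is permissible precisely because $T$ is abelian, this becomes $[t_1 g g_1 h \oplus t_2 g g_2 h \oplus \cdots]$, and since we have divided by the right $T$-action on $EG$ this represents the same class as $[t_1 g g_1 \oplus \cdots]$ in $EG/T$. Changing $[t_1 g_1 \oplus \cdots]$ to a $T$-equivalent representative in $BT = ET/T$ poses no problem since the target has already been quotiented by $T$.

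For the right square, the further projection $EG/T \to EG/G = BG$ sends $\widetilde{\phi}(gT,\xi)$ to $[t_1 g g_1 \oplus \cdots]$, which is $\phi(gT,\xi)$ on the nose; combined with the factorization $BT \xrightarrow{\iota} EG/T \to BG$ provided by Lemma \ref{BT->BG}, this gives the homotopy commutativity of the right square. For the left square I would inspect the two wedge summands separately: on $\{eT\} \times BT$ the formula collapses to $\widetilde{\phi}(eT,[t_1 g_1 \oplus \cdots]) = [t_1 g_1 \oplus \cdots]$, which is $\iota$, so $\widehat{\phi}$ restricts to $\iota^{-1} \circ \iota \simeq 1_{BT}$; on $G/T \times \{*\}$ with basepoint $* = [1e \oplus 0e \oplus \cdots]$, the formula gives $[1g \oplus 0e \oplus \cdots] = \bar{\alpha}(gT)$, so $\widehat{\phi}$ restricts to $\iota^{-1} \circ \bar{\alpha} = \alpha$ by the very definition of $\alpha$.

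The main technical point is the well-definedness of $\widetilde{\phi}$: the ambiguity in the coset representative for $gT$ must be absorbable by the right $T$-action on the Milnor join $EG$, and this absorption succeeds only because $T$ is abelian and the chosen Milnor coordinates $g_i$ all lie in $T$. Everything else is a routine unwinding of the Milnor construction, using only Lemma \ref{BT->BG} and the contractibility of $EG$, so no additional homotopical input is needed.
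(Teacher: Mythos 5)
Your proof is correct and follows essentially the same route as the paper: you define the lift $\widetilde{\phi}$ (the paper calls it $\bar{\phi}$) by the identical formula into $EG/T$, verify well-definedness via commutativity of $T$, set $\widehat{\phi}=\iota^{-1}\circ\widetilde{\phi}$, deduce the right square from Lemma \ref{BT->BG}, and check the left square on the two wedge summands exactly as the paper does. No substantive difference.
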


\begin{proof}
  Define a map
  \[
    \bar{\phi}\colon G/T\times BT\to EG/T,\quad(gT,[t_1g_1\oplus t_2g_2\oplus\cdots])\mapsto[t_1gg_1\oplus t_2gg_2\oplus\cdots].
  \]
  Quite similarly to the map $\phi$, we can see that the map $\bar{\phi}$ is well-defined. Let $\widehat{\phi}$ denote the composite
  \[
    G/T\times BT\xrightarrow{\bar{\phi}}EG/T\xrightarrow{\iota^{-1}}BT.
  \]
  Then by Lemma \ref{BT->BG}, the right square of the diagram in the statement is homotopy commutative. We also have
  \[
    \bar{\phi}(gT,[1e\oplus 0e\oplus 0e\oplus\cdots])=[1g\oplus 0e\oplus 0e\oplus\cdots]=\bar{\alpha}(gT)
  \]
  and
  \[
    \bar{\phi}(eT,[t_1g_1\oplus t_2g_2\oplus\cdots])=[t_1g_1\oplus t_2g_2\oplus\cdots]=\iota([t_1g_1\oplus t_2g_2\oplus\cdots]).
  \]
  for $[t_1g_1\oplus t_2g_2\oplus\cdots]\in BT$, where $[1e\oplus 0e\oplus 0e\oplus\cdots]$ is the basepoint of $BT$. Then the left square is homotopy commutative too, finishing the proof.
\end{proof}

We may think of the map $\widehat{\phi}$ as a higher version of the map defined by conjugation in \cite{Bo}. We define a map
\[
  \widehat{\Phi}\colon G/T\times_W\map_*(B\Z^m,BT)_0\to\map_*(B\Z^m,BG)_0
\]
by $\widehat{\Phi}(gT,f)(x)=\phi(gT,f(x))$ for $g\in G,\,f\in\map_*(B\Z^m,BT)_0$ and $x\in B\Z^m$. Since there is a natural homeomorphism $\Hom(\Z^m,T)_0\cong T^m$, we will not distinguish them.

\begin{lemma}
  \label{Phi}
  There is a commutative diagram
  \[
    \xymatrix{
      G/T\times_W\Hom(\Z^m,T)_0\ar[r]^(.57){\Phi}\ar[d]_{1\times\Theta}&\Hom(\Z^m,G)_0\ar[d]^\Theta\\
      G/T\times_W\map_*(B\Z^m,BT)_0\ar[r]^(.57){\widehat{\Phi}}&\map_*(B\Z^m,BG)_0.
    }
  \]
\end{lemma}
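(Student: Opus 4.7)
The plan is to verify the square directly by unraveling definitions in the Milnor construction: lift to $G/T\times\Hom(\Z^m,T)_0$, evaluate both compositions at a point $(gT,(g_1,\ldots,g_m))$, and compare the resulting maps $B\Z^m\to BG$ pointwise on representatives $[t_1n_1\oplus t_2n_2\oplus\cdots]$ with $n_i\in\Z^m$.

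First I would record the explicit Milnor-level formula for the classifying map of a homomorphism $\rho\colon\Z^m\to G$, namely $[t_1n_1\oplus t_2n_2\oplus\cdots]\mapsto[t_1\rho(n_1)\oplus t_2\rho(n_2)\oplus\cdots]$, and observe that $\Phi(gT,(g_1,\ldots,g_m))$ is just the conjugate $c_g\circ\rho$ of the $T$-valued homomorphism $\rho$ determined by $(g_1,\ldots,g_m)$, where $c_g(h)=ghg^{-1}$. Tracing the top-right composite then yields the map sending $[t_1n_1\oplus\cdots]$ to $[t_1g\rho(n_1)g^{-1}\oplus\cdots]$, while tracing the left-bottom composite yields $[t_1g\rho(n_1)\oplus\cdots]$ directly from the definition of $\phi$. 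The two classes coincide in $BG=EG/G$ because right multiplication by $g^{-1}\in G$ converts one into the other.

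Passing to the $W$-quotient requires checking that $\widehat{\Phi}$ descends, which is a separate short computation: for a lift $\tilde w\in N(T)$ of $w\in W$, the same right-multiplication identity in $BG$ absorbs the $\tilde w$ twist arising from the $W$-action on $T$ (and hence on $\map_*(B\Z^m,BT)_0$ via post-composition). The main obstacle is purely organizational: one must carefully distinguish $EG/T$ from $BG=EG/G$, since the identity $[t_1g\rho(n_1)\oplus\cdots]=[t_1g\rho(n_1)g^{-1}\oplus\cdots]$ holds in the latter but fails in the former. Conceptually nothing deeper is at play — it is precisely the twist built into the definition of $\phi$ (in contrast to $\widehat{\phi}$ of Lemma~\ref{phi}) that makes the diagram commute.
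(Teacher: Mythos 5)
Your proof is correct and follows the same approach as the paper: a direct pointwise verification in the Milnor model, where one checks that the two composites send $[t_1n_1\oplus t_2n_2\oplus\cdots]\in B\Z^m$ to $[t_1g\rho(n_1)\oplus\cdots]$ and $[t_1g\rho(n_1)g^{-1}\oplus\cdots]$ respectively, and these coincide in $BG=EG/G$ by right multiplication by $g^{-1}$. Your additional care in distinguishing $EG/T$ from $EG/G$ and in noting the $W$-descent of $\widehat{\Phi}$ makes explicit points the paper leaves implicit, but the argument is essentially identical.
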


\begin{proof}
  By definition, we have
  \[
    \Theta(f)([t_1g_1\oplus t_2g_2\oplus\cdots])=[t_1f(g_1)\oplus t_2f(g_2)\oplus\cdots]
  \]
  for $f\in\Hom(\Z^m,T)_0$ and $[t_1g_1\oplus t_2g_2\oplus\cdots]\in B\Z^m$. Then we get
  \begin{align*}
    &\widehat{\Phi}\circ(1\times\Theta)(gT,f)([t_1g_1\oplus t_2g_2\oplus\cdots])\\
    &=[t_1gf(g_1)g^{-1}\oplus t_2gf(g_2)g^{-1}\oplus\cdots]\\
    &=\Theta\circ\Phi(gT,f)([t_1g_1\oplus t_2g_2\oplus\cdots])
  \end{align*}
  for $g\in G,\,f\in\Hom(\Z^m,T)_0$ and $[t_1g_1\oplus t_2g_2\oplus\cdots]\in B\Z^m$. Thus the proof is finished.
\end{proof}

Next, we consider the evaluation map
\[
  \omega\colon\map_*(X,Y)_0\times X\to Y,\quad(f,x)\mapsto f(x).
\]
Note that the map $\phi\colon G/T\times BT\to BG$ factors through $G/T\times_WBT$. We denote the map $G/T\times_WBT\to BG$ by the same symbol $\phi$.

\begin{lemma}
  \label{omega}
  There is a commutative diagram
  \[
    \xymatrix{
      G/T\times_W\map_*(B\Z^m,BT)_0\times B\Z^m\ar[r]^(.56){\widehat{\Phi}\times 1}\ar[d]_{1\times\omega}&\map_*(B\Z^m,BG)_0\times B\Z^m\ar[d]^\omega\\
      G/T\times_WBT\ar[r]^(.55)\phi&BG.
    }
  \]
\end{lemma}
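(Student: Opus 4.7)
The plan is to verify commutativity by tracing a generic element through both compositions and showing they evaluate to the same value, using the very definition of $\widehat{\Phi}$ given just before the lemma.

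Start with an element $(gT, f, x) \in G/T \times_W \map_*(B\Z^m, BT)_0 \times B\Z^m$. Going along the top and then down the right, the pair $(\widehat{\Phi}\times 1)(gT,f,x) = (\widehat{\Phi}(gT,f), x)$, and then the evaluation $\omega$ sends this to $\widehat{\Phi}(gT,f)(x)$. By the definition of $\widehat{\Phi}$ stated immediately before Lemma \ref{Phi}, this equals $\phi(gT, f(x))$. Going down first and then along the bottom, the map $1\times \omega$ yields $(gT, f(x)) \in G/T \times_W BT$, and then $\phi$ sends this to $\phi(gT, f(x))$. Both compositions therefore agree on every element.

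The only point that requires mild justification is that all maps in the diagram are well-defined on their $W$-quotient domains. For the top, this is the content of the definition of $\widehat{\Phi}$. For the bottom, the paper observes just before the statement that $\phi\colon G/T\times BT \to BG$ factors through $G/T\times_W BT$, so the bottom arrow makes sense. Thus no obstruction arises.

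In short, there is no real obstacle here: the diagram commutes on the nose by unwinding the definition $\widehat{\Phi}(gT,f)(x) = \phi(gT, f(x))$, which is precisely how $\widehat{\Phi}$ was introduced. The lemma is essentially a tautology, recording the compatibility between $\widehat{\Phi}$ and evaluation, and should be proved in a few lines by displaying the equality $\omega\circ(\widehat{\Phi}\times 1) = \phi\circ(1\times\omega)$ pointwise.
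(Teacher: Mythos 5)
Your proof is correct and follows essentially the same approach as the paper: a direct element-chase showing both compositions send $(gT,f,x)$ to $\phi(gT,f(x))$, unwinding the definition of $\widehat{\Phi}$. The extra sentence noting well-definedness on the $W$-quotients is a reasonable addition but not something the paper dwells on.
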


\begin{proof}
  For $g\in G$, $f\in\map_*(B\Z^m,BT)_0$ and $x\in B\Z^m$, we have
  \[
    \omega\circ(\widehat{\Phi}\times 1)(gT,f,x)=\phi(gT,f(x))=\phi(gT,\omega(f,x))=\phi\circ(1\times\omega)(gT,f,x).
  \]
  Thus the statement is proved.
\end{proof}

%%%%% Section 3 %%%%%

\section{Rational cohomology}\label{rational cohomology}

In this section, we will prove Theorems \ref{main 1} and \ref{main 2}, and we will apply Theorem \ref{main 1} to prove Theorem \ref{nilpotent group}. To prove Theorem \ref{main 1}, we will employ the generating set of the rational cohomology of $\Hom(\Z^m,G)_0$ given in \cite{KT1}, and to prove Theorem \ref{main 2}, we will consider a specific element of $H^*(\Hom(\Z^m,SO(2n))_0)\cong H^*(SO(2n)/T\times T^m)^W$.

%%%%% Subsection 3.1 %%%%%

\subsection{Cohomology generators}

Hereafter, the coefficients of (co)homology will be always in $\Q$, and we will suppose that $G$ is of rank $n$, unless otherwise is specified. First, we set notation on cohomology. Since $G$ is of rank $n$, the cohomology of $BT$ is given by
\[
  H^*(BT)=\Q[x_1,\ldots,x_n],\quad|x_i|=2.
\]
We also have that the cohomology of $T^m$ is given by
\[
  H^*(T^m)=\Lambda(y_1^1,\ldots,y^1_n,\ldots,y^m_1,\ldots,y_n^m),\quad|y_i^j|=1
\]
such that $y_i^k=\pi_k^*(\overset{k}{\sigma(x_i)})$, where $\pi_k\colon B\Z^m\to B\Z$ is the $k$-th projection and $\sigma$ denotes the cohomology suspension. Let $[m]=\{1,2,\ldots,m\}$. For $I=\{i_1<\cdots<i_k\}\subset[m]$, we set
\[
  y_i^I=y_i^{i_1}\cdots y_i^{i_k}.
\]
It is well known that the map $\alpha\colon G/T\to BT$ induces an isomorphism
\[
  H^*(G/T)\cong H^*(BT)/(\widetilde{H}^*(BT)^W).
\]
We denote $\alpha^*(x_i)$ by the same symbol $x_i$, and so $H^*(G/T)$ is generated by $x_1,\ldots,x_n$.

Now we recall the minimal generating set of the rational cohomology of $\Hom(\Z^m,G)_0$ given in \cite{KT1}. For $d\ge 1$ and $I\subset[m]$, we define
\[
  z(d,I)=x_1^{d-1}y_1^I+\cdots+x_n^{d-1}y_n^I\in H^*(G/T\times T^m)
\]
and let
\[
  \mathcal{S}(m,U(n))=\{z(d,I)\mid d\ge 1,\,\emptyset\ne I\subset[m],\,d+|I|-1\le n\}
\]
where we have $|z(d,I)|=2d+|I|-2$. We also let
\[
  \mathcal{S}(m,SU(n))=\{z(d,I)\in\mathcal{S}(m,U(n))\mid d\ge 2\text{ or }|I|\ge 2\}
\]
where $x_1+\cdots+x_n=0$ and $y_1^i+\cdots+y_n^i=0$ for $i=1,\ldots,m$. Since $W$ is the symmetric group on $[n]$ for $G=U(n),SU(n)$ such that for $\sigma\in W$, $\sigma(x_i)=x_{\sigma(i)}$ and $\sigma(y_i^j)=y_{\sigma(i)}^j$, we have
\[
  \mathcal{S}(m,G)\subset H^*(G/T\times T^m)^W.
\]
For an integer $k$, let $\epsilon(k)=0$ for $k$ even and $\epsilon(k)=1$ for $k$ odd. We define
\[
  w(d,I)=x_1^{2d+\epsilon(|I|)-2}y_1^I+\cdots+x_n^{2d+\epsilon(|I|)-2}y_n^I\in H^*(G/T\times T^m)
\]
and let
\[
  \mathcal{S}(m,Sp(n))=\{w(d,I)\mid d\ge 1,\,\emptyset\ne I\subset[m],\,2d+|I|+\epsilon(|I|)-2\le 2n\}
\]
where we have $|w(d,I)|=4d+|I|+2\epsilon(|I|)-4$. We also let
\[
  \mathcal{S}(m,SO(2n+1))=\mathcal{S}(m,Sp(n)).
\]
Since $W$ is the signed symmetric group on $[n]$ for $G=Sp(n),SO(2n+1)$ such that for $\sigma\in W$, $(\pm\sigma)(x_i)=\pm x_{\sigma(i)}$ and $(\pm\sigma)(y_i^j)=\pm y_{\sigma(i)}^j$, we have
\[
  \mathcal{S}(m,G)\subset H^*(G/T\times T^m)^W.
\]
The following theorem is proved in \cite{KT1}.

\begin{theorem}
  \label{generator}
  If $G$ is the classical group except for $SO(2n)$, $(\Phi^*)^{-1}(\mathcal{S}(m,G))$ is a minimal generating set of the rational cohomology of $\Hom(\Z^m,G)_0$.
\end{theorem}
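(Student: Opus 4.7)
My plan is to reduce the statement to a concrete invariant-theoretic computation via Baird's Theorem \ref{Baird}. By that result, $\Phi^*$ identifies
\[
  H^*(\Hom(\Z^m,G)_0) \;\cong\; H^*(G/T\times T^m)^W,
\]
so it suffices to show that $\mathcal{S}(m,G)$ is a minimal algebra generating set of the right-hand side. I would work inside $(\Q[x_1,\ldots,x_n]/J_W)\otimes\Lambda(y_i^j)$ with the diagonal $W$-action, where $J_W=(\widetilde{H}^*(BT)^W)$.

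For generation, the elements $z(d,I)$ (resp.\ $w(d,I)$) are polarized power-sum symmetric functions in the pairs $(x_i,y_i^I)$. In characteristic zero, a standard polarization of Newton's identities shows by induction on bidegree that they generate the $W$-invariants of the full tensor algebra $\Q[x_1,\ldots,x_n]\otimes\Lambda(y_i^j)$: every invariant monomial reduces modulo lower-bidegree invariants to a product of these polarized power sums. Passing to the quotient by $J_W$ then yields the bounds $d+|I|-1\le n$ in type $A$ and $2d+|I|+\epsilon(|I|)-2\le 2n$ in types $B,C$, since once $d$ exceeds these thresholds the corresponding $z(d,I)$ or $w(d,I)$ becomes decomposable via the Chevalley relations cutting out $H^*(G/T)$. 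The parity correction $\epsilon(|I|)$ in types $B,C$ arises because the sign generators of the hyperoctahedral group act on $y_i^I$ by $(-1)^{|I|}$, so an extra factor of $x_i$ is needed when $|I|$ is odd to produce an invariant.

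For minimality, I would compare Poincar\'e series. The Chevalley--Shephard--Todd splitting $H^*(BT)\cong H^*(G/T)\otimes H^*(BT)^W$ of $W$-modules reduces the computation to a Molien-type formula
\[
  P\bigl(H^*(G/T\times T^m)^W;t\bigr) \;=\; \frac{1}{|W|}\sum_{w\in W}\prod_{i=1}^n(1-t^{2d_i})\cdot\frac{\det(1+tw\mid\mathfrak{t})^m}{\det(1-t^{2}w\mid\mathfrak{t})},
\]
where $d_1,\ldots,d_n$ are the degrees of the basic $W$-invariants. One then verifies by direct calculation that the free graded-commutative algebra on generators in the bidegrees prescribed by $\mathcal{S}(m,G)$ has the same Poincar\'e series, which, combined with the surjectivity from the previous step, forces the generating set to be minimal.

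The main obstacle is the types $B$ and $C$ Molien computation: the signed-permutation sum must be matched against the set $\mathcal{S}(m,G)$ with its parity shift $\epsilon(|I|)$ and the asymmetric treatment of odd versus even $|I|$. Organising the Molien sum by signed cycle type of $w\in W$ and tracking how each cycle contributes either an $x$- or a $y^I$-factor is the delicate combinatorial step. Once the Poincar\'e series are shown to coincide, generation plus the dimension count completes the argument.
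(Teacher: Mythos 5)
The paper does not actually prove Theorem~\ref{generator}; it imports it verbatim from the reference \cite{KT1} (``The following theorem is proved in \cite{KT1}''), so there is no in-text proof to compare your sketch against. Evaluated on its own terms, your reduction to $H^*(G/T\times T^m)^W$ via Theorem~\ref{Baird} is correct, and polarization is a reasonable idea for the generation step. But you gloss over the crux: the hard part is not that $\Q[x_1,\ldots,x_n]\otimes\Lambda(y_i^j)$ has polarized power-sum generators, but determining exactly which $z(d,I)$, $w(d,I)$ remain indecomposable after passing to the quotient by $J_W$, i.e.\ to $H^*(G/T)\otimes\Lambda(y_i^j)$. Nailing down the thresholds $d+|I|-1\le n$ and $2d+|I|+\epsilon(|I|)-2\le 2n$ is precisely the nontrivial content of \cite{KT1}, and asserting it ``via the Chevalley relations'' is not an argument.

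The minimality step as written, however, contains a genuine error. You claim that the free graded-commutative algebra on $\mathcal{S}(m,G)$ has the same Poincar\'e series as $H^*(G/T\times T^m)^W$ and that this forces minimality. This cannot be true: $G/T\times T^m$ is a closed manifold, so $H^*(G/T\times T^m)^W$ is a finite-dimensional graded vector space, whereas the free graded-commutative algebra on $\mathcal{S}(m,G)$ is infinite-dimensional as soon as $\mathcal{S}(m,G)$ contains a generator of positive even degree (for example $z(1,I)$ with $|I|=2$, of degree $2$, which exists whenever $m\ge2$ and $n\ge2$). Indeed the paper's own Theorem~\ref{low dimension} states that $\Lambda(\Q\{\mathcal{S}(m,G)\})\to H^*(\Hom(\Z^m,G)_0)$ is an isomorphism only through degree $d(m,G)$, so the cohomology ring is manifestly not free on this set. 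Minimality must instead be established by showing that $\mathcal{S}(m,G)$ maps to a linearly independent family in the module of indecomposables $QH^*(\Hom(\Z^m,G)_0)$, and a Poincar\'e-series count of the free algebra cannot deliver that.
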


%%%%% Subsection 3.2 %%%%%

\subsection{Proof of Theorem \ref{main 1}}

First, we consider the map $\widehat{\phi}\colon G/T\times BT\to BT$ of Lemma \ref{phi} in cohomology.

\begin{lemma}
  \label{hat-Phi}
  For each $x_i\in H^*(BT)$, we have
  \[
    \widehat{\phi}^*(x_i)=x_i\times 1+1\times x_i.
  \]
\end{lemma}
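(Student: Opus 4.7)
The plan is to apply the Künneth decomposition to $H^2(G/T \times BT)$ together with the restriction of $\widehat{\phi}$ to the wedge $G/T \vee BT$ given by Lemma \ref{phi}. Since $x_i \in H^2(BT)$, the Künneth formula gives
\[
  H^2(G/T \times BT) \cong \bigl(H^2(G/T)\otimes H^0(BT)\bigr)\oplus\bigl(H^1(G/T)\otimes H^1(BT)\bigr)\oplus\bigl(H^0(G/T)\otimes H^2(BT)\bigr).
\]

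The first step is to observe that the middle summand is zero. Indeed, $BT \simeq (\C P^\infty)^n$ so $H^1(BT)=0$; alternatively, the Bruhat decomposition exhibits $G/T$ as a CW complex with only even-dimensional cells, hence $H^1(G/T)=0$ as well. Consequently, the class $\widehat{\phi}^*(x_i)$ admits a unique decomposition of the form
\[
  \widehat{\phi}^*(x_i) = a\times 1 + 1\times b, \qquad a\in H^2(G/T),\; b\in H^2(BT).
\]

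The second step is to pin down $a$ and $b$ using naturality. Restricting the diagram in Lemma \ref{phi} along the two inclusions $G/T \hookrightarrow G/T \times BT$ and $BT \hookrightarrow G/T \times BT$ (through the respective basepoints) shows that $\widehat{\phi}|_{G/T}\simeq \alpha$ and $\widehat{\phi}|_{BT}\simeq \mathrm{id}_{BT}$. Pulling back $x_i$ by these restrictions picks out $a=\alpha^*(x_i)$ and $b=x_i$. By the convention that $\alpha^*(x_i)$ is again denoted $x_i$ in $H^*(G/T)$, the formula $\widehat{\phi}^*(x_i)=x_i\times 1+1\times x_i$ follows.

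There is essentially no obstacle here: the argument is purely formal once one notes the vanishing of $H^1(G/T)$ and $H^1(BT)$, which forces the Künneth cross-terms in degree two to disappear. The only care needed is to verify the identification of the two restrictions with $\alpha$ and $\mathrm{id}_{BT}$, and this was already recorded in Lemma \ref{phi}.
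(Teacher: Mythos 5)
Your argument is correct and is essentially the paper's proof, just unpacked: the paper cites the left square of Lemma \ref{phi} (which records exactly that $\widehat{\phi}$ restricts to $\alpha\vee 1$ on the wedge) and leaves implicit the Künneth/vanishing-of-$H^1$ step that you spell out. Nothing to change.
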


\begin{proof}
  The statement immediately follows from the left square of the homotopy commutative diagram in Lemma \ref{phi}.
\end{proof}

Next, we consider the map $\Theta\colon\Hom(\Z^m,T)_0\to\map_*(B\Z^m,BT)_0$.

\begin{lemma}
  \label{H^1 rank 1}
  The map $\Theta\colon\Hom(\Z,T)_0\to\map_*(B\Z,BT)_0$ is a homotopy equivalence.
\end{lemma}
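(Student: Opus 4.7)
The plan is to identify both source and target with $T$ up to homotopy and show that $\Theta$ corresponds to the classical comparison map $T\to\Omega BT$ coming from the loop--classifying space adjunction. Evaluation at $1\in\Z$ gives a homeomorphism $\Hom(\Z,T)\cong T$, which equals $\Hom(\Z,T)_0$ since $T$ is connected. On the target side, the Milnor construction yields $B\Z\simeq S^1$, so $\map_*(B\Z,BT)_0\simeq\Omega BT$, where the subscript $0$ is in fact vacuous because $BT$ is simply connected.

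Next I would verify that under these identifications, $\Theta$ agrees with the standard unit map $T\to\Omega BT$. This uses the Milnor model of $B\Z$ fixed in Section \ref{the map}: for $g\in T$, the homomorphism $\phi_g\colon\Z\to T$ sending $1$ to $g$ induces on Milnor classifying spaces a map $B\phi_g\colon B\Z\to BT$ whose adjoint is the loop $t\mapsto[tg\oplus(1-t)e\oplus 0e\oplus\cdots]$ in $BT$, which is exactly the classical comparison loop representing $g$. Because $T$ is abelian, this matching is strict rather than only up to homotopy, so there are no coherence issues to handle.

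Finally, the classical map $H\to\Omega BH$ is a weak homotopy equivalence for any well-pointed topological group $H$, by comparing the path-loop fibration of $BH$ with the universal principal bundle $H\to EH\to BH$. Applied to $H=T$, and since both $T$ and $\Omega BT$ have the homotopy type of CW complexes, this weak equivalence is a genuine homotopy equivalence, yielding the lemma. The only step that requires any real verification is the middle one, but since $T$ is abelian it reduces to a routine unpacking of the Milnor construction, so it poses no serious obstacle.
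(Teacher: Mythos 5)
Your proof is correct and follows essentially the same route as the paper's: both identify $\Theta$, via the Milnor model, with the canonical comparison map $T\to\Omega BT$ and then invoke the classical fact that this comparison is a weak homotopy equivalence (upgraded to a genuine one by CW considerations). The paper packages the identification slightly more carefully by mapping the unreduced suspension $\widetilde{\Sigma}\{0,1\}$ into $B\Z$ and chasing a commutative diagram, which sidesteps the small wrinkle that your explicit formula $t\mapsto[tg\oplus(1-t)e\oplus 0e\oplus\cdots]$ is literally a path between two distinct points of $BT$ (the two ``slots'' of the join) rather than a based loop, and also avoids invoking commutativity of $T$, which is not actually needed for this step.
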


\begin{proof}
  For a topological group $K$ with a non-degenerate unit, there is a homeomorphism $(K*K)/K\cong\widetilde{\Sigma}K$ such that the composite
  \[
    \widetilde{\Sigma}K\simeq(K*K)/K\to BK
  \]
  is identified with the adjoint of the natural homotopy equivalence $K\simeq\Omega BK$, where $\widetilde{\Sigma}$ denotes the unreduced suspension. By definition, $\widetilde{\Sigma}\Z$ is homotopy equivalent to a wedge of infinitely many copies of $S^1$, and the map $\widetilde{\Sigma}\Z\to B\Z$ is identified with the fold map onto $S^1$. Thus the composite $\widetilde{\Sigma}\{0,1\}\to\widetilde{\Sigma}\Z\to B\Z$ is a homotopy equivalence. Note that for any homomorphism $f\colon\Z\to T$, there is a commutative diagram
  \[
    \xymatrix{
      \widetilde{\Sigma}\{0,1\}\ar[r]\ar[d]&\widetilde{\Sigma}\Z\ar[r]^{\widetilde{\Sigma}f}\ar[d]&\widetilde{\Sigma}T\ar[d]\\
      B\Z\ar@{=}[r]&B\Z\ar[r]^{Bf}&BT.
    }
  \]
  Thus since $\Hom(\Z,T)_0=\map_*(\{0,1\},T)_0$, we get a commutative diagram
  \[
    \xymatrix{
      \map_*(\{0,1\},T)_0\ar[r]^(.46){\widetilde{\Sigma}}\ar@{=}[d]&\map_*(\widetilde{\Sigma}\{0,1\},\widetilde{\Sigma}T)_*\ar[r]&\map_*(\widetilde{\Sigma}\{0,1\},BT)_0\\
      \Hom(\Z,T)_0\ar[rr]^\Theta&&\map_*(B\Z,BT)_0\ar[u]_\simeq.
    }
  \]
  Clearly, the composite of the top maps is identified with the homotopy equivalence $\map_*(\{0,1\},\Omega BT)_0\cong\map_*(\Sigma\{0,1\},BT)_0$. Then the bottom map is a homotopy equivalence too, completing the proof.
\end{proof}

\begin{lemma}
  \label{H^1}
  The map $\Theta\colon\Hom(\Z^m,T)_0\to\map_*(B\Z^m,BT)_0$ is a homotopy equivalence.
\end{lemma}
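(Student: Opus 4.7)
The plan is to reduce Lemma \ref{H^1} to the rank-one case of Lemma \ref{H^1 rank 1} by restricting to the $m$ coordinate circles of $B\Z^m$. For each $i=1,\ldots,m$, let $\iota_i\colon B\Z\to B\Z^m$ denote the map of classifying spaces induced by the $i$-th coordinate inclusion $\Z\to\Z^m$, and form the restriction
\[
  \rho\colon\map_*(B\Z^m,BT)_0\to\map_*(B\Z,BT)_0^m,\quad f\mapsto(f\circ\iota_1,\ldots,f\circ\iota_m).
\]
Under the canonical identification $\Hom(\Z^m,T)_0=T^m=\Hom(\Z,T)_0^m$ and by functoriality of the Milnor construction, the square
\[
  \xymatrix{
    \Hom(\Z^m,T)_0\ar[r]^{\Theta}\ar@{=}[d]&\map_*(B\Z^m,BT)_0\ar[d]^{\rho}\\
    \Hom(\Z,T)_0^m\ar[r]^{\Theta^m}&\map_*(B\Z,BT)_0^m
  }
\]
commutes, where $\Theta^m$ is the $m$-fold product of the $m=1$ version of $\Theta$. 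Indeed, if $(t_1,\ldots,t_m)\in T^m$ corresponds to the homomorphism $g\colon\Z^m\to T$ with $g(e_i)=t_i$, then the restriction of $Bg$ along $\iota_i$ is the map classifying the homomorphism $1\mapsto t_i$, which is the $i$-th component of $\Theta^m(t_1,\ldots,t_m)$.

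By Lemma \ref{H^1 rank 1}, the bottom map $\Theta^m$ is a homotopy equivalence, so it suffices to prove that $\rho$ is one as well. Since $T$ is a torus, $BT$ is weakly equivalent to $K(\pi_1(T),2)$, and therefore for any based CW complex $X$ we have
\[
  \pi_i(\map_*(X,BT)_0)\cong\tilde{H}^{2-i}(X;\pi_1(T)).
\]
Specializing to $X=B\Z^m$ and $X=B\Z$ and taking an $m$-fold product in the latter case, both the source and target of $\rho$ are Eilenberg--MacLane spaces of type $K(\pi_1(T)^m,1)$, with the only nontrivial homotopy group concentrated in degree $1$. Hence $\rho$ is a homotopy equivalence if and only if it is a $\pi_1$-isomorphism. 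Under the adjunction identification $\pi_1\cong\tilde{H}^1$, the map induced by $\rho$ on $\pi_1$ becomes the restriction $H^1(B\Z^m;\pi_1(T))\to H^1(B\Z;\pi_1(T))^m$ induced by the $\iota_i$, which is the standard Künneth isomorphism since the $\iota_i$ hit the generators of $H_1(B\Z^m;\Z)=\Z^m$.

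There is no substantive obstacle here: the argument reduces the $\Z^m$ case to the $\Z$ case via naturality, and then invokes the standard fact that mapping spaces into $K(\pi,2)$ are themselves Eilenberg--MacLane spaces. The only care needed is in verifying commutativity of the square (a direct consequence of the functoriality of $B$) and identifying $\rho_*$ on $\pi_1$ with the cohomology restriction map, which is routine unwinding of the $\pi_1\cong\tilde H^1$ adjunction.
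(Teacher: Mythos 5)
Your proof is correct, and it takes a genuinely (though only mildly) different route from the paper's. The paper passes through the free group $F_m$: it identifies $\Hom(F_m,T)_0$ and $\map_*(BF_m,BT)_0$ with $m$-fold products (using that $BF_m$ is a wedge of circles), applies Lemma \ref{H^1 rank 1} there, and then transfers the result to $\Z^m$ via the abelianization $F_m\to\Z^m$, observing that the left map is a homeomorphism ($T$ abelian) and the right map is a homotopy equivalence because the cofiber of $BF_m\to B\Z^m$ is simply connected. You skip the free group entirely, restricting directly along the coordinate inclusions $\iota_i\colon B\Z\to B\Z^m$ to build the comparison map $\rho$, and you verify that $\rho$ is a weak equivalence by an explicit computation: $BT\simeq K(\pi_1T,2)$, so both sides of $\rho$ are $K(\pi_1(T)^m,1)$'s, and $\rho$ is a $\pi_1$-isomorphism because the $\iota_i$ generate $H_1(B\Z^m)$. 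What your version buys is a shorter, more computational argument that avoids introducing $F_m$; what the paper's version makes more explicit is exactly where the product decomposition comes from (the wedge structure of $BF_m$) and how $\Theta$ behaves under a change of group, which is a pattern they reuse. Both ultimately rest on the same two inputs: the rank-one case, and the fact that $BT$ is a $K(\pi,2)$ so that $\map_*(-,BT)$ only sees $H^{\le 2}$ of the source (this is implicit in the paper's simply-connected-cofiber step, and explicit in your homotopy-group computation). One minor point to keep in mind when invoking ``homotopy equivalence if and only if $\pi_1$-isomorphism'' is that the mapping spaces should have CW homotopy type, which is standard here but worth a word.
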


\begin{proof}
  Let $F_m$ be the free group of rank $m$. Clearly, we have
  \[
    \Hom(F_m,T)_0\cong(\Hom(\Z,T)_0)^m.
  \]
  Since $BF_m$ is homotopy equivalent to a wedge of $m$ copies of $S^1$, we also have
  \[
    \map_*(BF_m,BT)_0\simeq(\map_*(B\Z,BT)_0)^m.
  \]
  It is easy to see that through these equivalences, the map $\Theta\colon\Hom(F_m,T)_0\to\map_*(BF_m,BT)_0$ is identified with the product of $m$ copies of the map $\Theta\colon\Hom(\Z,T)_0\to\map_*(B\Z,BT)_0$. Thus by Lemma \ref{H^1 rank 1}, the map $\Theta\colon\Hom(F_m,T)_0\to\map_*(BF_m,BT)_0$ is a homotopy equivalence. Now we consider the commutative diagram
  \[
    \xymatrix{
      \Hom(\Z^m,T)_0\ar[r]^(.45)\Theta\ar[d]&\map_*(B\Z^m,BT)_0\ar[d]\\
      \Hom(F_m,T)_0\ar[r]^(.45)\Theta&\map_*(BF_m,BT)_0
    }
  \]
  induced from the abelianization $F_m\to\Z^m$. Since $T$ is abelian, the left map is a homeomorphism. Since the cofiber of the map $BF_m\to B\Z^m$ is simply-connected, the right map is a homotopy equivalence. Thus the top map is a homotopy equivalence too, completing the proof.
\end{proof}

We consider the evaluation map $\omega\colon\map_*(B\Z^m,BT)_0\times B\Z^m\to BT$ in cohomology. Since $B\Z^m$ is homotopy equivalent to the $m$-dimensional torus, we have
\[
  H^*(B\Z^m)=\Lambda(t_1,\ldots,t_m),\quad|t_i|=1.
\]
For $I=\{i_1<\cdots<i_k\}\subset[m]$, let
\[
  t_I=t_{i_1}\cdots t_{i_k}.
\]

\begin{lemma}
  \label{omega-Phi}
  For each $x_i\in H^*(BT)$, we have
  \[
    (\omega\circ(\Theta\times 1))^*(x_i)=y_i^1\times t_1+\cdots+y_i^m\times t_m.
  \]
\end{lemma}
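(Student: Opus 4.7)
The plan is to reduce Lemma \ref{omega-Phi} to the case $m=1$ using naturality in the group $\Z^m$, and to deduce the base case from the identification $\map_*(B\Z,BT)_0\simeq\Omega BT$ already used in the proof of Lemma \ref{H^1 rank 1}. Throughout I would work in the Künneth decomposition $H^*(T^m\times B\Z^m)=H^*(T^m)\otimes H^*(B\Z^m)$, which applies since $B\Z^m$ has finite-dimensional rational cohomology.

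First, I would constrain the shape of $(\omega\circ(\Theta\times 1))^*(x_i)$. A general degree-$2$ class has the form $\alpha\otimes 1+\sum_j\beta_j\otimes t_j+\sum_{j<k}c_{jk}\otimes t_jt_k$ with $\alpha\in H^2(T^m)$, $\beta_j\in H^1(T^m)$, and $c_{jk}\in\Q$. Restricting $\omega\circ(\Theta\times 1)$ to $T^m\times\{*\}$ yields the constant map to the basepoint of $BT$ (since each $\Theta(f)$ is based), which forces $\alpha=0$; restricting to $\{*\}\times B\Z^m$ also yields the constant map (the identity of $T^m$ corresponds to the trivial homomorphism), which forces every $c_{jk}=0$. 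So the class has the shape $\sum_j\beta_j\otimes t_j$ and it remains to identify each $\beta_k$.

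Next, I would exploit naturality along the $k$-th coordinate inclusion $\mu_k\colon\Z\to\Z^m$. Since the restriction map $\Hom(\Z^m,T)\to\Hom(\Z,T)$ induced by $\mu_k$ is identified with the $k$-th projection $p_k\colon T^m\to T$, there is a commutative square
\[
  \xymatrix{
    T^m\times B\Z\ar[r]^{1\times B\mu_k}\ar[d]_{p_k\times 1}&T^m\times B\Z^m\ar[d]^{\omega\circ(\Theta\times 1)}\\
    T\times B\Z\ar[r]^{\omega\circ(\Theta\times 1)}&BT.
  }
\]
Pulling $x_i$ back via the top-right path yields $\beta_k\otimes t_1$, because $(B\mu_k)^*(t_j)=\delta_{jk}t_1$. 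The bottom-left path yields the $p_k$-pullback of the $m=1$ value of $(\omega\circ(\Theta\times 1))^*(x_i)$. Granting the base case $(\omega\circ(\Theta\times 1))^*(x_i)=y_i^1\otimes t_1$ for $m=1$, this path returns $p_k^*(y_i^1)\otimes t_1=y_i^k\otimes t_1$ by the very definition of $y_i^k$; comparing, $\beta_k=y_i^k$ for every $k$, which is the claim.

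The remaining task, and the main obstacle, is the base case $m=1$. Here $\Theta\colon T\to\map_*(B\Z,BT)_0$ is identified in the proof of Lemma \ref{H^1 rank 1} with the standard homotopy equivalence $T\simeq\Omega BT$, via the factorization through $\widetilde{\Sigma}\{0,1\}\to\widetilde{\Sigma}\Z\to B\Z$. Under this identification the composite $\omega\circ(\Theta\times 1)\colon T\times B\Z\to BT$ factors through the smash product and becomes the counit $\Sigma\Omega BT\to BT$ of the $\Sigma\dashv\Omega$ adjunction, whose effect on $H^*(BT)$ is the cohomology suspension followed by the Künneth insertion $\otimes\,t_1$. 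Hence $x_i\mapsto\sigma(x_i)\otimes t_1=y_i^1\otimes t_1$ by the definition of $y_i^1$, which closes the argument.
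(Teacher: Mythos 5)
Your proof is correct and follows essentially the same route as the paper: both establish the $m=1$ base case $\omega^*(x_i)=y_i^1\times t_1$ and then propagate it to general $m$ via naturality along the coordinate inclusions $\Z\hookrightarrow\Z^m$ (equivalently the projections $T^m\to T$), after a degree/K\"unneth argument constrains the pullback to the form $\sum_j\beta_j\otimes t_j$. The only real difference is the treatment of the base case, where the paper cites \cite{KK} while you derive it directly by identifying $\omega\circ(\Theta\times 1)$ with the $\Sigma\dashv\Omega$ adjunction counit and invoking the definition of the cohomology suspension and of $y_i^1$ — a self-contained unwinding of the same standard fact.
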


\begin{proof}
  For the evaluation map $\omega\colon\map_*(B\Z,BT)_0\times B\Z\to BT$, we have
  \[
    \omega^*(x_1)=y_1^1\times t_1
  \]
  as in \cite{KK}, where we identify $\map_*(B\Z,BT)_0$ with $T$. By Lemma \ref{H^1}, we may assume $\Theta^*(y_1^1)=y_1^1$. Let $\iota_i\colon B\Z\to B\Z^m$ and $\pi_i\colon B\Z^m\to B\Z$ denote the $i$-th inclusion and the $i$-th projection, respectively. Since $\omega\circ(\pi_i^*\times\iota_j)$ is trivial for $i\ne j$, $\omega^*(x_k)$ is a linear combination of $\pi_k^*(y_1^1)\times t_1,\ldots,\pi_k^*(y_1^m)\times t_m$. There is a commutative diagram
  \[
    \xymatrix{
      \map_*(B\Z,BT)_0\times B\Z\ar[r]^(.72)\omega\ar[d]_{\pi_i^*\times\iota_i}&BT\ar@{=}[d]\\
      \map_*(B\Z^m,BT)_0\times B\Z^m\ar[r]^(.75)\omega&BT.
    }
  \]
  Then we get
  \begin{align*}
    (\Theta\times 1)^*\circ\omega^*(x_k)&=(\Theta\times 1)^*(\pi_k^*(y_1^1)\times t_1+\cdots+\pi_k^*(y_1^m)\times t_m)\\
    &=y_k^1\times t_1+\cdots+y_k^m\times t_m.
  \end{align*}
  Thus the proof is finished.
\end{proof}

Next, we consider the evaluation map $\omega\colon\map_*(B\Z^m,BG)_0\times B\Z^m\to BG$ in cohomology. Recall that the rational cohomology of $BG$ is given by
\[
  H^*(BG)=\Q[z_1,\ldots,z_n].
\]
We choose generators $z_1,\ldots,z_n$ as
\[
  j^*(z_i)=
  \begin{cases}
    x_1^i+\cdots+x_n^i&G=U(n)\\
    x_1^{2i}+\cdots+x_n^{2i}&G=Sp(n),SO(2n+1)
  \end{cases}
\]
and set $H^*(BSU(n))=H^*(BU(n))/(z_1)$, where $j\colon BT\to BG$ denotes the natural map. For $i=1,\ldots,n$ and $\emptyset\ne I\subset[m]$, we define $z_{i,I}\in H^*(\map_*(B\Z^m,BG)_0)$ by
\[
  \omega^*(z_i)=\sum_{\emptyset\ne I\subset[m]}z_{i,I}\times t_I
\]
where $z_{i,I}=1$ for $|z_i|=|I|$ and $z_{i,I}=0$ for $|z_i|<|I|$.

\begin{proposition}
  \label{H(map)}
  The rational cohomology of $\map_*(B\Z^m,BG)_0$ is a free commutative graded algebra generated by
  \[
    \mathcal{S}=\{z_{i,I}\mid 1\le i\le n,\,\emptyset\ne I\subset[m],\,|z_i|>|I|\}.
  \]
\end{proposition}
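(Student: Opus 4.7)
The plan is to exploit the rational formality of $BG$ and reduce to a computation of mapping spaces into Eilenberg--MacLane factors. Since $G$ is a compact connected Lie group, $H^*(BG;\Q)=\Q[z_1,\ldots,z_n]$ is a polynomial algebra on even-degree generators, so there is a rational equivalence $BG\simeq_\Q\prod_{i=1}^n K(\Q,|z_i|)$. Because $B\Z^m$ has the homotopy type of a finite CW complex (the $m$-torus), based mapping spaces commute with rational products up to rational equivalence on null components, giving
\[
\map_*(B\Z^m,BG)_0\simeq_\Q\prod_{i=1}^n\map_*(B\Z^m,K(\Q,|z_i|))_0.
\]

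For each Eilenberg--MacLane factor, the mapping space $\map_*(X,K(\Q,n))$ is a topological abelian group whose homotopy groups are $\pi_k\cong\tilde H^{n-k}(X;\Q)$ for $k\ge 1$, so its null component splits as a product of Eilenberg--MacLane spaces. Specializing to $X=B\Z^m$ and using that $\tilde H^{n-k}(B\Z^m;\Q)$ has basis $\{t_I:\emptyset\ne I\subset[m],\,|I|=n-k\}$, I obtain
\[
\map_*(B\Z^m,K(\Q,n))_0\simeq\prod_{\emptyset\ne I\subset[m],\,|I|<n}K(\Q,n-|I|),
\]
so its rational cohomology is a free commutative graded algebra on one generator in degree $n-|I|$ for each nonempty $I\subset[m]$ with $|I|<n$. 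Taking the product over the factors $K(\Q,|z_i|)$ then produces a free generating set indexed by the pairs $(i,I)$ with $|z_i|>|I|$, giving the correct cardinality and degrees for $\mathcal S$.

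The last step is to match these canonical generators with the classes $z_{i,I}$ defined in the statement. Under the adjoint identification $\pi_k(\map_*(X,K(\Q,n)))\cong\tilde H^{n-k}(X;\Q)$, the cohomology generator dual to $t_I$ is characterized as the coefficient of $1\times t_I$ in the K\"unneth decomposition of $\omega^*(\iota)$, where $\iota$ is the fundamental class of $K(\Q,n)$. Applied componentwise to the rational splitting of $BG$, this matches the defining relation $\omega^*(z_i)=\sum_I z_{i,I}\times t_I$, so the classes $z_{i,I}$ with $|z_i|>|I|$ coincide with the canonical free generators produced in the previous step.

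I expect the main subtlety to be the justification that rationalization commutes with $\map_*(B\Z^m,-)$ on null components. This uses that $B\Z^m$ has finitely generated rational cohomology and that both source and target are nilpotent spaces, together with the standard results of Hilton--Mislin--Roitberg and Sullivan on rational mapping spaces. Once this reduction is in place, everything else is a direct application of the classical computation of $H^*(\map_*(X,K(\Q,n)))$ as a GEM.
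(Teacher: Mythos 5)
Your argument is correct and follows essentially the same route as the paper: rationalize $BG$ to a product of Eilenberg--MacLane spaces, observe that the mapping space is consequently a generalized Eilenberg--MacLane space rationally, and then identify the generators via the evaluation map. The paper is terser, delegating the generator-identification step to Atiyah--Bott (Proposition 2.20), whereas you spell it out explicitly, but the ideas coincide.
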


\begin{proof}
  Since the rationalization of $BG$ is homotopy equivalent to a product of Eilenberg-MacLane spaces, so is the rationalization of $\map_*(B\Z^m,BG)_0$. Then the cohomology of $\map_*(B\Z^m,BG)_0$ is a free commutative algebra. The rest can be proved quite similarly to \cite[Proposition 2.20]{AB}.
\end{proof}

We compute $\Theta^*(z_{i,I})$ for the classical group $G$ except for $SO(2n)$.

\begin{proposition}
  \label{Theta(z)}
  For $i=1,\ldots,n$ and $\emptyset\ne I\subset[m]$, if $|z_i|>|I|$, then
  \[
    \Phi^*\circ\Theta^*(z_{i,I})=
    \begin{cases}
      \displaystyle\frac{i!}{(i-|I|)!}z(i-|I|+1,I)&G=U(n),SU(n)\vspace{1mm}\\
      \displaystyle\frac{(2i)!}{(2i-|I|)!}w(i-\tfrac{|I|+\epsilon(|I|)}{2}+1,I)&G=Sp(n),SO(2n+1).
    \end{cases}
  \]
\end{proposition}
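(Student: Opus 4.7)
The strategy is to combine the commutative diagrams of Lemmas \ref{Phi} and \ref{omega} with the factorization of $\phi$ in Lemma \ref{phi} in order to reduce the computation of $\Phi^*\circ\Theta^*(z_{i,I})$ to a binomial expansion. By Lemma \ref{Phi}, it suffices to compute $(1\times\Theta)^*\circ\widehat{\Phi}^*(z_{i,I})$, and by Lemma \ref{H^1} we may freely identify $H^*(\map_*(B\Z^m,BT)_0)$ with $H^*(T^m)$. Combining the defining identity $\omega^*(z_i)=\sum_{I}z_{i,I}\times t_I$ with Lemma \ref{omega}, we see that $\widehat{\Phi}^*(z_{i,I})$ is precisely the coefficient of $t_I$ in $(1\times\omega_T)^*\phi^*(z_i)$.

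Next I would compute $\phi^*(z_i)$ via Lemma \ref{phi}, which gives $\phi^*=\widehat{\phi}^*\circ j^*$ for the natural map $j\colon BT\to BG$. The classes $j^*(z_i)$ are power sums: $\sum_k x_k^i$ for $G=U(n),SU(n)$ and $\sum_k x_k^{2i}$ for $G=Sp(n),SO(2n+1)$. Writing $N=i$ or $N=2i$ respectively, Lemma \ref{hat-Phi} together with the fact that $|x_k|$ is even gives, by the binomial theorem,
\[
\phi^*(z_i)=\sum_{k=1}^{n}\sum_{\ell=0}^{N}\binom{N}{\ell}x_k^{N-\ell}\otimes x_k^{\ell}\in H^*(G/T)\otimes H^*(BT).
\]

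The final step is to apply $(1\times\omega_T)^*$. By Lemmas \ref{H^1} and \ref{omega-Phi}, the generator $x_k$ of the $BT$-factor is sent to $\sum_{j=1}^{m}y_k^j\otimes t_j$. Expanding the $\ell$-th power of this element inside the graded-commutative algebra $H^*(T^m)\otimes H^*(B\Z^m)$, each summand has even total degree $2$ and squares to zero, so the power equals (up to Koszul signs) $\ell!$ times the sum over subsets $I\subset[m]$ with $|I|=\ell$ of the monomials $y_k^I\otimes t_I$. Extracting the coefficient of $t_I$ and matching against the defining identity for $z_{i,I}$ then produces, up to signs that depend only on $|I|$,
\[
\widehat{\Phi}^*(z_{i,I})=\frac{N!}{(N-|I|)!}\sum_{k=1}^{n}x_k^{N-|I|}y_k^I,
\]
the right-hand side already lying in the $W$-invariant subring $H^*(G/T\times T^m)^W$. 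For $G=U(n),SU(n)$ this is exactly $\tfrac{i!}{(i-|I|)!}z(i-|I|+1,I)$. For $G=Sp(n),SO(2n+1)$ one writes $2i-|I|=2d+\epsilon(|I|)-2$ with $d=i-\tfrac{|I|+\epsilon(|I|)}{2}+1$ to recognize the right-hand side as $\tfrac{(2i)!}{(2i-|I|)!}w(i-\tfrac{|I|+\epsilon(|I|)}{2}+1,I)$.

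The main obstacle is the bookkeeping of Koszul signs in the expansion of $\bigl(\sum_{j}y_k^j\otimes t_j\bigr)^{\ell}$ and in the reordering of the resulting monomials into the canonical form $y_k^I\otimes t_I$ and $t_I$ with indices in increasing order; these signs depend only on $|I|$ and must be shown to be consistent with the chosen conventions for $z_{i,I}$, $z(d,I)$, and $w(d,I)$. A secondary check is the parity identity $2i-|I|\equiv\epsilon(|I|)\pmod 2$ in the $Sp(n)$ and $SO(2n+1)$ cases, which ensures both that $d$ is a positive integer and that $\sum_k x_k^{2i-|I|}y_k^I$ is invariant under the signed symmetric Weyl group.
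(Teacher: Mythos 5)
Your proposal takes essentially the same route as the paper's own proof: reduce via the commuting diagrams of Lemmas \ref{Phi} and \ref{omega} and the factorization of $\phi$ in Lemma \ref{phi}, compute $\phi^*(z_i)$ through Lemma \ref{hat-Phi} as a sum of $N$-th powers, substitute using Lemmas \ref{H^1} and \ref{omega-Phi}, expand by the multinomial theorem, and read off the coefficient of $t_I$ against the defining identity for $z_{i,I}$. The one place you are actually more careful than the paper is in flagging the Koszul sign $(-1)^{\binom{|I|}{2}}$ arising from reordering $y_k^{j_a}$'s past $t_{j_b}$'s, a sign the paper's displayed computation silently drops; as you note, it depends only on $|I|$, so it is harmless for Theorem \ref{main 1} and the other applications, but it is a real discrepancy with the proposition as literally stated.
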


\begin{proof}
  First, we prove the $G=U(n)$ case. By Lemmas \ref{phi} and \ref{hat-Phi}, we have
  \[
    \phi^*(z_i)=\widehat{\phi}^*(j^*(z_i))=\widehat{\phi}^*(x_1^i+\cdots+x_n^i)=\sum_{k=1}^n(x_k\times 1+1\times x_k)^i.
  \]
  By Lemmas \ref{Phi} and \ref{omega}, there is a homotopy commutative diagram
  \[
    \xymatrix{
      G/T \times T^m \times B\Z^m \ar^-{1 \times \Theta \times 1}[r] \ar^{\Phi \times 1}[d] & G/T  \times \map_*(B\Z^m,BT)_0 \times B\Z^m \ar^-{1\times \omega}[r]\ar^{\widehat{\Phi} \times 1}[d]& G/T\times BT \ar^{\phi}[d]\\
      \Hom(\Z^m,G) \times B\Z^m \ar^-{\Theta\times 1}[r]& \map_*(B\Z^m,BT)_0 \times B\Z^m \ar^-{\omega}[r] & BG.
    }
  \]
  Then by Lemma \ref{omega-Phi}, we get
  \begin{align*}
    &(\Phi\times 1)^*\circ(\Theta\times 1)^*\circ\omega^*(z_i)\\
    &=(1\times\Theta\times 1)^*\circ(\widehat{\Phi}\times 1)^*\circ\omega^*(z_i)\\
    &=(1\times\Theta\times 1)^*\circ(1\times\omega)^*\circ\phi^*(z_i)\\
    &=(1\times\Theta\times 1)^*\circ(1\times\omega)^*\left(\sum_{k=1}^n(x_k\times 1+1\times x_k)^i\right)\\
    &=\sum_{k=1}^n(x_k\times 1+y_k^1\times t_1+\cdots+y_k^m\times t_m)^i\\
    &=\sum_{\emptyset\ne I\subset[m]}\frac{i!}{(i-|I|)!}\left(\sum_{k=1}^nx_k^{i-|I|}\times y_k^I\right)\times t_I\\
    &=\sum_{\emptyset\ne I\subset[m]}\frac{i!}{(i-|I|)!}\Phi^*(z(i-|I|+1,I))\times t_I.
  \end{align*}
  Thus the $G=U(n)$ case is proved. The $G=SU(n)$ case follows immediately from the $G=U(n)$ case, and the $G=Sp(n),SO(2n+1)$ case can be proved verbatim.
\end{proof}

Now we are ready to prove Theorem \ref{main 1}.

\begin{proof}
  [Proof of Theorem \ref{main 1}]
  Combine Theorem \ref{generator} and Proposition \ref{Theta(z)}.
\end{proof}

%%%%% Subsection 3.3 %%%%%

\subsection{Proof of Theorem \ref{nilpotent group}}

We show a property of the rational cohomology of a nilpotent group that we are going to use. We refer to \cite{HMR} for the localization of nilpotent groups. For a finitely generated group $\pi$, let $\overline{\mathrm{ab}}\colon\pi\to\Z^m$ denote the composite of the abelianization $\pi\to\pi^\mathrm{ab}$ and the projection $\pi^\mathrm{ab}\to\pi^\mathrm{ab}/\mathrm{Tor}\cong\Z^m$, where $\mathrm{Tor}$ is the torsion part of $\pi^\mathrm{ab}$.

\begin{lemma}
  \label{nilpotent cohomology}
  Let $\pi$ be a finitely generated nilpotent group. Then the rationalization $\pi_{(0)}$ is abelian if and only if the map
  \[
    \overline{\mathrm{ab}}^*\colon H^2(B\Z^m)\to H^2(B\pi)
  \]
  is injective.
\end{lemma}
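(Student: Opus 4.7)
The plan is to compute both sides using the rational Chevalley--Eilenberg model for nilpotent groups. Since $\pi$ is finitely generated nilpotent, its rationalization $\pi_{(0)}$ is the Malcev completion of $\pi/\mathrm{Tor}(\pi)$ and corresponds, via Malcev's theorem, to a finite-dimensional nilpotent Lie algebra $\mathfrak{g}$ over $\Q$. The rationalization of $B\pi$ admits the Chevalley--Eilenberg algebra $(\Lambda\mathfrak{g}^*,d)$ as a Sullivan minimal model, so
\[
  H^*(B\pi)\cong H^*(\Lambda\mathfrak{g}^*,d),
\]
under which $H^1(B\pi)$ is identified with $V^*:=(\mathfrak{g}/[\mathfrak{g},\mathfrak{g}])^*$; in particular $\dim V=m$, and $\overline{\mathrm{ab}}$ is modeled by the Lie algebra abelianization $\mathfrak{g}\twoheadrightarrow V$.

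I would next observe that, viewed inside $\mathfrak{g}^*$, $V^*$ coincides with the annihilator of $[\mathfrak{g},\mathfrak{g}]$, so every element of $V^*$ is $d$-closed and hence so is every element of $\Lambda^2 V^*$. The induced map
\[
  \overline{\mathrm{ab}}^*\colon H^2(B\Z^m)=\Lambda^2 V^*\to H^2(B\pi)
\]
is therefore the composite of the inclusion $\Lambda^2 V^*\hookrightarrow Z^2(\Lambda\mathfrak{g}^*)$ with the projection to cohomology, whose kernel is exactly $\Lambda^2 V^*\cap d(\mathfrak{g}^*)$.

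If $\pi_{(0)}$ is abelian then $\mathfrak{g}$ is abelian, $d\equiv 0$ on $\mathfrak{g}^*$, and the intersection is zero. For the converse, I would assume $\mathfrak{g}$ is non-abelian and invoke nilpotency of $\mathfrak{g}$ to get the strict inclusion $[\mathfrak{g},[\mathfrak{g},\mathfrak{g}]]\subsetneq[\mathfrak{g},\mathfrak{g}]$, since otherwise the lower central series would stabilize at a nonzero term. I would then pick $\eta\in\mathfrak{g}^*$ with $\eta|_{[\mathfrak{g},[\mathfrak{g},\mathfrak{g}]]}=0$ and $\eta|_{[\mathfrak{g},\mathfrak{g}]}\ne 0$. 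From $d\eta(X,Y)=-\eta([X,Y])$ the first condition gives $d\eta(X,Y)=0$ whenever $X\in[\mathfrak{g},\mathfrak{g}]$, placing $d\eta$ in $\Lambda^2 V^*$, while the second gives $d\eta\ne 0$. This produces a nonzero element of $\ker\overline{\mathrm{ab}}^*$.

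The main point requiring care, rather than a genuine obstacle, is verifying that $\overline{\mathrm{ab}}$ really is modeled by $\mathfrak{g}\twoheadrightarrow V$: one has to use that the torsion subgroup of $\pi^{\mathrm{ab}}$ vanishes after rationalization, so the projection $\pi^{\mathrm{ab}}\to\pi^{\mathrm{ab}}/\mathrm{Tor}$ is a rational equivalence, and that under the Malcev correspondence the abelianization $\pi\to\pi^{\mathrm{ab}}$ is rationally encoded by $\mathfrak{g}\to\mathfrak{g}/[\mathfrak{g},\mathfrak{g}]$.
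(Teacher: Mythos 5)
Your proof is correct and is essentially the same approach as the paper's, just written in Lie-algebra (Chevalley--Eilenberg) language rather than in Sullivan minimal-model language. The paper invokes Hasegawa's theorem that the minimal model of $B\pi$ has degree-one generators $x_1,\ldots,x_n$ adapted to the lower central series, with $dx_1=\cdots=dx_m=0$ and $dx_k\ne 0$ for $k>m$, and reads off that if $m<n$ then $dx_{m+1}$ is a nonzero decomposable in $\Lambda^2(x_1,\ldots,x_m)$ killed in $H^2(B\pi)$. Your construction of $\eta$ vanishing on $[\mathfrak{g},[\mathfrak{g},\mathfrak{g}]]$ but not on $[\mathfrak{g},\mathfrak{g}]$ produces precisely such a generator, since $d\eta\in\Lambda^2 V^*\cap d(\mathfrak{g}^*)$, so the two arguments supply the same witness. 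Your version has the mild advantage of not presupposing Hasegawa's normal form for the minimal model and instead deducing what is needed directly from nilpotency; the paper's is shorter because it quotes \cite{H} for the structure of the model and then observes $m=n$ iff injectivity on $H^2$ without spelling out the non-injective direction.
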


\begin{proof}
  By definition, the rationalization of $B\pi$ is rationally homotopy equivalent to an iterated principal $S^1$-bundles. Then as in \cite{H}, the minimal model of $B\pi$ is given by $(\Lambda(x_1,\ldots,x_n),d)$ for $|x_i|=1$ such that
  \[
    dx_1=\cdots=dx_m=0,\quad dx_k=\sum_{i,j<k}\alpha_{i,j}x_ix_j\ne 0\quad(k>m).
  \]
  Moreover, the minimal model of $B\Z^m$ is given by $(\Lambda(x_1,\ldots,x_m),d=0)$ such that the map $\overline{\mathrm{ab}}\colon B\pi\to B\Z^m$ induces the inclusion $(\Lambda(x_1,\ldots,x_m),d=0)\to(\Lambda(x_1,\ldots,x_n),d)$. Observe that $\pi_{(0)}$ is abelian if and only if the map $\overline{\mathrm{ab}}\colon B\pi\to B\Z^m$ is a rational homotopy equivalence. Then $\pi_{(0)}$ is abelian if and only if $m=n$, which is equivalent to the map $\overline{\mathrm{ab}}^*\colon H^2(B\Z^m)\to H^2(B\pi)$ is injective.
\end{proof}

Now we are ready to prove Theorem \ref{nilpotent group}.

\begin{proof}
  [Proof of Theorem \ref{nilpotent group}]
  By the naturality of the map $\Theta$, there is a commutative diagram
  \[
    \xymatrix{
      \Hom(\Z^m,G)_0\ar[r]^(.45)\Theta\ar[d]_{\overline{\mathrm{ab}}^*}&\map_*(B\Z^m,BG)_0\ar[d]^{\overline{\mathrm{ab}}^*}\\
      \Hom(\pi,G)_0\ar[r]^(.45)\Theta&\map_*(B\pi,BG)_0.
    }
  \]
  Bergeron and Silberman \cite{BS} proved that the left map is a homotopy equivalence. Since the rationalization of $BG$ is a product of Eilenberg-MacLane spaces, there is a rational homotopy equivalence
  \begin{equation}
    \label{Thom}
    \map_*(X,BG)_0\simeq_{(0)}\prod_{n-i\ge 1}^\infty K(H^i(X)\otimes\pi_n(BG),n-i)
  \end{equation}
  for any connected CW complex $X$, which is natural with respect to $X$ and $G$. In particular, since $\pi_4(BG)\cong\Z$, there is a monomorphism $\iota\colon H_2(X)\to QH^2(\map_*(X,BG)_0)$ which is natural with respect to $X$, where $QA$ denotes the module of indecomposables of an augmented algebra $A$. Then there is a commutative diagram
  \[
    \xymatrix{
      H_2(B\pi)\ar[d]^{\overline{\mathrm{ab}}_*}\ar[r]^(.36)\iota&QH^2(\map_*(B\pi,BG))\ar[r]^{\Theta^*}\ar[d]^{(\overline{\mathrm{ab}}^*)^*}&QH^2(\Hom(B\pi,G)_0)\ar[d]^{(\overline{\mathrm{ab}}^*)^*}_\cong\\
      H_2(B\Z^m)\ar[r]^(.36)\iota&QH^2(\map_*(B\Z^m,BG))\ar[r]^{\Theta^*}&QH^2(\Hom(B\Z^m,G)_0).
    }
  \]
  By Theorem \ref{generator} and Propositions \ref{H(map)} and \ref{Theta(z)}, the composite of the bottom maps is an isomorphism. Thus by Lemma \ref{nilpotent cohomology}, the statement is proved.
\end{proof}

\begin{proof}
  [Proof of Corollary \ref{nilmanifold}]
  It is well known that a nilmanifold $M$ is homotopy equivalent to the classifying space of a finitely generated torsion free nilpotent group. Thus by Theorem \ref{nilpotent group}, the proof is finished.
\end{proof}

%%%%% Subsection 3.4 %%%%%

\subsection{Proof of Theorem \ref{main 2}}

Before we begin the proof of Theorem \ref{main 2}, we consider the case of $SO(2n)$ for $n=2,3$. We need the following lemma.

\begin{lemma}
  \label{covering}
  Let $G,H$ be compact connected Lie groups. If there is a covering $G\to H$, then there is a commutative diagram
  \[
    \xymatrix{
      \Hom(\Z^m,BG)_0\ar[r]^(.47)\Theta\ar[d]&\map_*(B\Z^m,BG)_0\ar[d]\\
      \Hom(\Z^m,BH)_0\ar[r]^(.47)\Theta&\map_*(B\Z^m,BH)_0,
    }
  \]
  where the vertical maps are isomorphisms in rational cohomology and rational homotopy groups.
\end{lemma}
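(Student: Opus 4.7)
Let $p\colon G\to H$ denote the given covering, whose kernel $Z$ is finite (because $G$ is compact) and central (because $G$ is connected). Post-composition with $p$ defines the left vertical map $p_*\colon \Hom(\Z^m,G)_0\to\Hom(\Z^m,H)_0$, and $Bp\colon BG\to BH$ induces the right vertical map $(Bp)_*$ on mapping spaces. The square commutes by the naturality of $\Theta$ in $G$. Since $Z$ is finite, $BZ$ is rationally contractible, so the fibration $BZ\to BG\to BH$ shows that $Bp$ is a rational homotopy equivalence. By the natural rational splitting (\ref{Thom}) recalled later in the paper, the rational homotopy type of $\map_*(B\Z^m,BK)_0$ depends only on $H^*(B\Z^m;\Q)$ and $\pi_*(BK)\otimes\Q$, so $(Bp)_*$ is a rational equivalence in both cohomology and homotopy.

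\textbf{Left vertical in cohomology.} Centrality forces $Z\subset T_G$ for any maximal torus $T_G$ of $G$, so $T_H:=p(T_G)$ is a maximal torus of $H$ and $p|_{T_G}\colon T_G\to T_H$ is a finite covering of tori of equal dimension. Moreover $p$ descends to a canonical homeomorphism $G/T_G\cong H/T_H$ and an isomorphism of Weyl groups $W_G\cong W_H=:W$ compatible with the torus actions. Naturality of Baird's map $\Phi$ then yields the commutative square
\[
  \xymatrix{
    G/T_G\times_W T_G^m \ar[r]^-{\Phi_G} \ar[d]_-{1\times_W p^m} & \Hom(\Z^m,G)_0 \ar[d]^-{p_*}\\
    H/T_H\times_W T_H^m \ar[r]^-{\Phi_H} & \Hom(\Z^m,H)_0,
  }
\]
whose left vertical is a finite covering of degree $|Z|^m$ and hence a rational equivalence in both cohomology and homotopy. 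Combined with Theorem \ref{Baird}, which asserts that $\Phi_G^*$ and $\Phi_H^*$ are isomorphisms on rational cohomology, it follows that $p_*$ is a rational cohomology isomorphism.

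\textbf{Main obstacle.} The principal difficulty is to upgrade this cohomology statement for $p_*$ to a rational homotopy statement, since Theorem \ref{Baird} is a cohomology result only. The strategy is to verify that $\Hom(\Z^m,G)_0$ and $\Hom(\Z^m,H)_0$ are nilpotent spaces, so that Sullivan's theorem upgrades the rational cohomology equivalences $\Phi_G$ and $\Phi_H$ to rational homotopy equivalences; then the map on rational homotopy induced by $p_*$ is identified with the left vertical of the square above, which is manifestly a rational equivalence. Nilpotency should follow from the abelian-ness of $\pi_1(\Hom(\Z^m,G)_0)$ established in \cite{GPS} together with the observation that conjugation by any element of the connected group $G$ is homotopic to the identity and therefore acts trivially on the rational homotopy groups of $\Hom(\Z^m,G)_0$.
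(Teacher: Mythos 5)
Your treatment of the right vertical is correct and matches the paper. Your cohomology argument for the left vertical is a legitimate alternative to the paper's citation of \cite{KT1}: you use the $W$-equivariant $Z^m$-covering $G/T_G\times_W T_G^m\to H/T_H\times_W T_H^m$ together with Baird's Theorem \ref{Baird}. One small point you should make explicit there: a finite covering is not automatically a rational cohomology isomorphism; what saves you is that the deck group $Z^m$ acts by translation on the torus factor, hence by maps homotopic to the identity, so the transfer argument gives $H^*(\text{base};\Q)\cong H^*(\text{cover};\Q)^{Z^m}=H^*(\text{cover};\Q)$.

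The genuine gap is in the rational-homotopy part of the left vertical. You propose to deduce it by showing the $\Hom$ spaces are nilpotent and then upgrading a cohomology isomorphism to a homotopy one via Sullivan's theorem, identified through the $\Phi$-square. But to use $\Phi_G$ and $\Phi_H$ as rational homotopy equivalences in this way you would also need $G/T\times_W T^m$ to be nilpotent, which you do not address (its $\pi_1$ surjects onto the Weyl group $W$, so this is not obvious); moreover, the left vertical of the $\Phi$-square has non-nilpotent fundamental groups in general, so saying it is ``manifestly a rational equivalence'' in homotopy is not justified — $\pi_1\otimes\Q$ may not even be meaningful there. Your sketch of nilpotency of $\Hom(\Z^m,G)_0$ via ``conjugation is homotopic to the identity'' also does not obviously control the $\pi_1$-action on higher $\pi_i$, since that action is not a priori given by conjugation by elements of $G$.

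The paper sidesteps all of this with a much more direct observation, which is the ingredient you are missing: by \cite{Go}, the map $\Hom(\Z^m,G)_0\to\Hom(\Z^m,H)_0$ is \emph{itself} a covering map, with finite fiber $K^m$ where $K=\ker(G\to H)$. This immediately gives isomorphisms on $\pi_i$ for $i\ge 2$, and on $\pi_1\otimes\Q$ it gives an isomorphism because the map is injective with finite cokernel and both fundamental groups are abelian by \cite{GPS}. No nilpotency-of-spaces argument or Sullivan upgrade is needed. I would recommend replacing your homotopy-groups step with this direct covering argument.
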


\begin{proof}
  Let $K$ be the fiber of the covering $G\to H$. Then $K$ is a finite subgroup of $G$ contained in the center. In particular, the map $BG\to BH$ is a rational homotopy equivalence, implying that the right map is a rational homotopy equivalence. As is shown in \cite{Go}, the left map is a covering map with fiber $K^m$, so it is an isomorphism in rational homotopy groups because the fundamental groups of $\Hom(\Z^m,G)_0$ and $\Hom(\Z^m,H)_0$ are abelian as in \cite{GPS}. It is also proved in \cite{KT1} that the left map is an isomorphism in rational cohomology, completing the proof.
\end{proof}

\begin{corollary}
  \label{SO low rank}
  For $n=2,3$, the map
  \[
    \Theta\colon\Hom(\Z^m,SO(2n))_0\to\map_*(B\Z^m,BSO(2n))_0
  \]
  is surjective in rational cohomology.
\end{corollary}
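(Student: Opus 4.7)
The strategy I would pursue is to use low-dimensional exceptional isomorphisms of Lie groups together with Lemma \ref{covering} to reduce the $SO(2n)$ case, which is not covered by Theorem \ref{main 1}, to one that is. Specifically, I would exploit the isomorphisms $Spin(6)\cong SU(4)$ and $Spin(4)\cong SU(2)\times SU(2)$, combined with the standard double covers $Spin(2n)\to SO(2n)$.

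For $n=3$ the argument is essentially immediate: Lemma \ref{covering} applied to the covering $SU(4)\to SO(6)$ produces a commutative square whose vertical maps are rational cohomology isomorphisms. Since the top map $\Theta$ for $SU(4)$ is surjective in rational cohomology by Theorem \ref{main 1}, a diagram chase yields the same conclusion for the bottom map, that is, for $SO(6)$.

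For $n=2$ the reduction is via the covering $SU(2)\times SU(2)\to SO(4)$. Lemma \ref{covering} applies verbatim, since its hypothesis only requires the total space of the covering to be a compact connected Lie group. What I then need is surjectivity of $\Theta$ in rational cohomology for the group $SU(2)\times SU(2)$. This is not literally the statement of Theorem \ref{main 1}, but follows from it together with the natural product decompositions
\[
\Hom(\Z^m,G_1\times G_2)_0\cong\Hom(\Z^m,G_1)_0\times\Hom(\Z^m,G_2)_0
\]
and
\[
\map_*(B\Z^m,B(G_1\times G_2))_0\simeq\map_*(B\Z^m,BG_1)_0\times\map_*(B\Z^m,BG_2)_0,
\]
under which $\Theta$ for the product factors as the product of the maps $\Theta$ for each factor. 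The K\"unneth theorem applied to Theorem \ref{main 1} for $SU(2)$ then gives what is needed.

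The main point requiring any thought is precisely this product argument in the $n=2$ case; beyond it the corollary is purely formal, following from Lemma \ref{covering} and Theorem \ref{main 1}. Everything else — that a covering of compact connected Lie groups induces rational equivalences on both the Hom-space and the mapping space, and that surjectivity in rational cohomology is preserved under products of CW complexes — is standard.
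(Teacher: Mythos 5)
Your proposal is correct and follows essentially the same route as the paper: reduce from $SO(2n)$ to $Spin(2n)$ via Lemma~\ref{covering}, then invoke the exceptional isomorphisms $Spin(4)\cong SU(2)\times SU(2)$ and $Spin(6)\cong SU(4)$ together with Theorem~\ref{main 1}. You spell out the product-decomposition/K\"unneth step needed for $n=2$ (which the paper leaves implicit since $SU(2)\times SU(2)$ is not literally a ``classical group'' covered by Theorem~\ref{main 1}), but that is simply making explicit what the paper takes for granted, not a different argument.
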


\begin{proof}
  By Lemma \ref{covering}, it is sufficient to prove the statement for $Spin(2n)$, instead of $SO(2n)$. Then since $Spin(4)\cong SU(2)\times SU(2)$ and $Spin(6)\cong SU(4)$, the proof is finished by Theorem \ref{main 1}.
\end{proof}

Now we begin the proof of Theorem \ref{main 2}. For a monomial $z=x_1^{i_1}\cdots x_n^{i_n}y_1^{I_1}\cdots y_n^{I_n}$ in $H^*(BT\times T^m)$, let
\[
  d(z)=(i_1+|I_1|,\ldots,i_n+|I_n|)
\]
where $I_1,\ldots,I_n\subset[m]$. If all entries of $d(z)$ are even (resp. odd), then we call a monomial $z$ even (resp. odd).

\begin{lemma}
  \label{d}
  If $G=SO(2n)$, then every element of $H^*(BT\times T^m)^W$ is a linear combination of even and odd monomials.
\end{lemma}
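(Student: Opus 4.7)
The plan is to exploit only the subgroup $K \subset W$ of ``pure'' sign changes rather than the full Weyl group, which suffices since every $W$-invariant is \emph{a fortiori} $K$-invariant. For $G = SO(2n)$, the Weyl group $W$ is the group of signed permutations of $[n]$ with an even number of sign changes, acting on both the $x_k$ and the $y_k^j$ by the same permutation and the same signs. Let $K \subset W$ be the subgroup of pure sign changes $\epsilon = (\epsilon_1, \ldots, \epsilon_n) \in \{\pm 1\}^n$ satisfying $\prod_{k=1}^n \epsilon_k = 1$; this is a subgroup of order $2^{n-1}$, generated by the ``double sign changes'' $\epsilon_{k,l}$ that negate exactly positions $k$ and $l$.

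The first step is to observe that $K$ acts diagonally on the monomial basis of $H^*(BT \times T^m)$: for $z = x_1^{i_1} \cdots x_n^{i_n} y_1^{I_1} \cdots y_n^{I_n}$, the element $\epsilon \in K$ sends $z$ to $\chi_z(\epsilon)\, z$, where
\[
  \chi_z(\epsilon) = \prod_{k=1}^n \epsilon_k^{i_k+|I_k|} = \prod_{k=1}^n \epsilon_k^{d(z)_k},
\]
since each $x_k$ and each $y_k^j$ scales by $\epsilon_k$. Consequently the monomial decomposition refines the isotypic decomposition for $K$, and a linear combination $\sum_z c_z z$ is $K$-invariant iff $c_z = 0$ whenever $\chi_z$ is non-trivial on $K$.

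The second step is to identify when $\chi_z$ is trivial on $K$. Evaluating $\chi_z$ on the generators $\epsilon_{k,l}$ gives $(-1)^{d(z)_k + d(z)_l}$, and this is $+1$ for every pair $k, l$ iff all entries of the tuple $d(z)$ share a single parity, which is exactly the condition that $z$ be even or odd in the sense of the statement. Since every $W$-invariant is $K$-invariant, the result follows immediately. The argument is essentially mechanical once one notices that the $S_n$ part of $W$ plays no role at this stage; the only bookkeeping point to double-check is that the scaling of each $y_k^j$ under $\epsilon$ is indeed $\epsilon_k$ (inherited from the $W$-action on $T^m$ via the same diagonal as on $x_k$), so no real obstacle is anticipated.
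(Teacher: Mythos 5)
Your proof is correct and takes essentially the same approach as the paper's: both exploit exactly the double-sign-change elements of $W(D_n)$ that negate two of the coordinates, and deduce that all entries of $d(z)$ must share a parity. The only difference is presentational — you organize these elements into the subgroup $K$ and phrase the conclusion via the diagonal character $\chi_z$, while the paper applies the elements one pair $(i,j)$ at a time — but the substance is identical.
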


\begin{proof}
  Given $1\le i<j\le n$, there is $w\in W$ such that
  \[
    w(x_k)=
    \begin{cases}
      -x_k&k=i,j\\
      x_k&k\ne i,j
    \end{cases}\qquad
    w(y_k)=
    \begin{cases}
      -y_k&k=i,j\\
      y_k&k\ne i,j.
    \end{cases}
  \]
  Then every monomial $z$ in $H^*(BT\times T^m)$ satisfies $w(z)=(-1)^{d_i+d_j}z$, where $d(z)=(d_1,\ldots,d_n)$. So if $z$ is contained in some element of $H^*(BT\times T^m)^W$, $d_1+d_2,d_2+d_3,\ldots,d_{n-1}+d_n$ are even. Thus $z$ is even for $d_1$ even, and $z$ is odd for $d_1$ odd, completing the proof.
\end{proof}

We define a map
\[
  \pi\colon H^*(BT\times T^m)\to H^*(BT\times T^m)^W,\quad x\mapsto\sum_{w\in W}w(x).
\]
For $m\ge 3$ and $G=SO(2n)$ with $n\ge 4$, let
\[
  \bar{a}=x_1\cdots x_{n-4}y_{n-3}^1y_{n-2}^2y_{n-1}^3y_n^1y_n^2y_n^3\in H^*(BT\times T^m).
\]
and let $a=\pi(\bar{a})$.

\begin{lemma}
  \label{a}
  The element $(\alpha\times 1)^*(a)$ of $H^*(SO(2n)/T\times T^m)^W$ is indecomposable, where $\alpha\colon G/T\rightarrow BT$ is as in Section \ref{the map}.
\end{lemma}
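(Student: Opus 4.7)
The plan is to first reduce indecomposability in $H^*(SO(2n)/T\times T^m)^W$ to indecomposability in the larger ring $H^*(BT\times T^m)^W$, and then to prove the latter by a parity-and-weight-shape analysis. For the reduction, Borel's theorem gives that $\alpha^*$ is surjective with kernel the ideal of $H^*(BT)$ generated by $\widetilde H^*(BT)^W$. Tensoring with $H^*(T^m)$ and passing to $W$-invariants in characteristic zero, the kernel of $(\alpha\times 1)^*$ on invariants becomes the ideal of $H^*(BT\times T^m)^W$ generated by the same positive-degree $W$-invariants $\widetilde H^*(BT)^W$. Since those generators are themselves in positive degree, this ideal is contained in the decomposables of $H^*(BT\times T^m)^W$, so it suffices to prove $a$ is indecomposable in $H^*(BT\times T^m)^W$.

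Suppose for contradiction that $a=\sum_i g_if_i$ with $g_i,f_i\in H^*(BT\times T^m)^W$ of positive degree. All monomials of $a$ have the odd weight vector $(1,\ldots,1,3)$, so by Lemma \ref{d} the element $a$ lies in the ``odd'' summand of $H^*(BT\times T^m)^W$; after splitting $g_i,f_i$ by parity (Lemma \ref{d}), by the $\Z^m$-grading by $y^j$-degrees, and by total degree, I may assume each $g_i,f_i$ has fixed parity, $y$-multidegree, and total degree, with $g_i$ even and $f_i$ odd. A position-by-position analysis then forces any product of monomials $\alpha\beta$ (with $\alpha\in g_i$, $\beta\in f_i$) contributing to an $a$-monomial to have $\alpha$ of weight shape $(0,\ldots,2,\ldots,0)$, with the $2$ at the weight-$3$ position, and $\beta$ of weight shape $(1,\ldots,1)$: indeed at each position the $g_i$-entry is even and the $f_i$-entry is odd, summing to $1$ or $3$.

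The crux is to show that every such $f_i$ vanishes. The even $W$-invariants whose monomials have weight $2$ at a single position form the short list $\{p_1=\sum_k x_k^2\}\cup\{\sum_k y_k^jy_k^{j'}:j<j'\in\{1,2,3\}\}$, with the label restriction coming from the $y$-multidegree $(2,2,2)$ of $a$. For each such $g_i$, the matching $f_i$ has $y$-multidegree $(2,2,2)$ minus that of $g_i$, and a short total-weight count forces every monomial of $f_i$ to have weight shape $(1,\ldots,1)$. Each such monomial has some $y^j$-label repeated at two distinct positions $k\ne k'$ (because the $y$-multidegree of $f_i$ has a component $\geq 2$), and the transposition $(k\,k')\in W$ sends the monomial to its negative; hence the $W$-orbit sum, and therefore $f_i$, vanishes. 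The element $a$ itself escapes this cancellation because in the Type A monomial $\bar a$ the repetition of a label (for example $y^1$ at positions $n-3$ and $n$) is combined with a singleton factor at $n-3$ and a triple factor at $n$, so the corresponding transposition sends $\bar a$ to a structurally different element of its $W$-orbit rather than to $-\bar a$.

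The main obstacle I foresee is the careful weight-shape bookkeeping: one must enumerate the finite list of allowed $y$-multidegrees for $g_i$ and confirm the repeated-label cancellation of $f_i$ in each case. Once this is done, $\sum_i g_if_i$ has zero projection onto the weight-shape $(1,\ldots,1,3)$ component, whereas $a$ is entirely contained there with nonzero $W$-orbit sum, yielding the desired contradiction.
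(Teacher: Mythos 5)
Your overall strategy -- reduce to indecomposability upstairs in $H^*(BT\times T^m)^W$, refine a hypothetical decomposition by parity (Lemma~\ref{d}), $y$-multidegree and total degree, and then kill the odd factor by showing a transposition in $W$ sends each of its weight-$(1,\ldots,1)$ monomials to its negative -- uses the same core mechanism as the paper (the paper kills $\pi(b)$ for a monomial factor $b$ of $\bar a$ with $d(b)=(1,\ldots,1)$ by exactly this kind of transposition). The setup is different: the paper lifts a decomposition of $(\alpha\times1)^*(a)$ directly to a monomial factorization $\bar a=bc$, while you filter systematically; both roads lead to the same sign-cancellation. Your account of why $\bar a$ itself survives the cancellation is also the right heuristic, though the paper pins down the coefficient $2^{n-1}(n-4)!$ of $\bar a$ in $a=\pi(\bar a)$ and the nonvanishing of $\alpha^*(x_1\cdots x_{n-4})$, which you should do too since $a\ne 0$ is needed for a contradiction.

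There are, however, two concrete gaps. First, the reduction step contains a false claim: the ideal of $H^*(BT\times T^m)^W$ generated by $\widetilde H^*(BT)^W$ is \emph{not} contained in the decomposables. For example, $p_1=\sum_k x_k^2$ lies in that ideal (it is one of the generators) but is indecomposable in $H^*(BT\times T^m)^W$, since the only invariants of degree $2$ have positive $y$-degree and their products cannot reproduce a $y$-degree-zero class. The reduction can be saved, but you need the extra observation that $a$ is concentrated in $y$-multidegree $(2,2,2)$: in that graded piece any element of the kernel ideal is $\sum_j p_j w_j$ with $w_j$ of $y$-degree $6>0$, hence a product of two positive-degree invariants, so the $y$-multidegree $(2,2,2)$ component of $(\text{decomposable})+(\text{kernel})$ is decomposable and the reduction goes through. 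Second, your ``short list'' of even invariants $g_i$ is incomplete: it omits the degree-$3$ invariants $\sum_k x_k y_k^j$ for $j\in\{1,2,3\}$, whose monomials $x_ky_k^j$ also have weight shape $(0,\ldots,2,\ldots,0)$. Fortunately the same repeated-label transposition argument disposes of the matching $f_i$ (which then has $y$-multidegree with a component $\geq 2$), so the conclusion is unaffected, but the enumeration as written is not exhaustive.

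Finally, a small point of bookkeeping: the ``total-weight count'' forcing every monomial of $f_i$ to have shape $(1,\ldots,1)$ works precisely when $|g_i|+|\mu_{g_i}|=4$ (equivalently, when $g_i$ actually has a monomial of shape $(0,\ldots,2,\ldots,0)$ in the relevant $y$-multidegree); for the other summands $g_if_i$ simply contributes nothing to the $a$-shaped component, so they can be discarded. Stating this dichotomy explicitly would close the argument cleanly.
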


\begin{proof}
  It is easy to see that $\alpha^*(x_1\cdots x_{n-4})\ne 0$ in $H^*(SO(2n)/T)$ because
  \[
    H^*(SO(2n)/T)=\Q[x_1,\ldots,x_n]/(p_1,\ldots,p_{i-1},e)
  \]
  where $p_i$ is the $i$-th elementary symmetric polynomial in $x_1^2,\ldots,x_n^2$ and $e=x_1\cdots x_n$. Then $(\alpha\times 1)^*(\bar{a})\ne 0$ in $H^*(SO(2n)/T\times T^m)$. So since $a$ includes the term $2^{n-1}(n-4)!\bar{a}$, we have $(\alpha\times 1)^*(a)\ne 0$ in $H^*(SO(2n)/T\times T^m)^W$.

  Now we suppose that $(\alpha\times 1)^*(a)$ is decomposable. Then there are $b,c\in \widetilde{H}^*(BT\times T^m)$ such that $\pi(b)\pi(c)$ includes the monomial $\bar{a}$, and so we may assume $\bar{a}=bc$. Note that
  \[
    (1,\ldots,1,3)=d(\bar{a})=d(bc)=d(b)+d(c).
  \]
  Then since $d(b)\ne 0$ and $d(c)\ne 0$, it follows from Lemma \ref{d} that we may assume $d(b)=(1,\ldots,1)$, implying $b=x_1\cdots x_{n-4}y_{n-3}^1y_{n-2}^2y_{n-1}^3y_n^i$ for some $i=1,2,3$. Let $\sigma$ be the transposition of $n$ and $k$, where $k=n-3,n-2,n-1$ for $i=1,2,3$, respectively. Then $\sigma$ belongs to $W$, and $\sigma(b)=b$. Let $W=V\sqcup V\sigma$ be the coset decomposition. Then we have
  \[
    \pi(b)=\sum_{v\in V}v(b+\sigma(b))=\sum_{v\in V}v(b-b)=0
  \]
  and so we get $(\alpha\times 1)^*(a)=0$, which is a contradiction. Thus we obtain that $(\alpha\times 1)^*(a)$ is indecomposable, as stated.
\end{proof}

\begin{proposition}
  \label{SO(2n)}
  If $m\ge 3$ and $n\ge 4$, then $(\alpha\times 1)^*(a)\in H^*(SO(2n)/T\times T^m)^W$ does not belong to the image of the composite
  \[
    SO(2n)/T\times_W T^m\xrightarrow{\Phi}\Hom(\Z^m,SO(2n))_0\xrightarrow{\Theta}\map_*(B\Z^m,BSO(2n))_0
  \]
  in rational cohomology.
\end{proposition}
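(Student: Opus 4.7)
The plan is to lift $\Phi^*\circ\Theta^*$ to an algebra homomorphism $L\colon H^*(\map_*(B\Z^m,BSO(2n))_0)\to H^*(BT\times T^m)^W$ and then to derive a contradiction by comparing the coefficient of the specific monomial $\bar a$ on the two sides of the hypothetical identity $(\alpha\times 1)^*(a)=\Phi^*\circ\Theta^*(y)$. I will choose polynomial generators $z_1,\dots,z_n$ of $H^*(BSO(2n))$ with $j^*(z_i)=x_1^{2i}+\cdots+x_n^{2i}$ for $i\le n-1$ and $j^*(z_n)=x_1\cdots x_n$; the computation in the proof of Proposition \ref{Theta(z)} applies verbatim to give a $W$-invariant lift $L(z_{i,I})=\frac{(2i)!}{(2i-|I|)!}w\bigl(i-\tfrac{|I|+\epsilon(|I|)}{2}+1,I\bigr)$ for $i\le n-1$, and an analogous computation will identify $L(z_{n,I})$ with the coefficient of $t_I$ in $\prod_{k=1}^n\bigl(x_k+\sum_{j=1}^m y_k^j t_j\bigr)$, which is $W$-invariant because $\prod_k\epsilon_k=1$ for the Weyl group of $SO(2n)$. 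Extending multiplicatively yields $L$ with $(\alpha\times 1)^*\circ L=\Phi^*\circ\Theta^*$.

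Given the hypothetical identity, $a-L(y)$ lies in $\ker((\alpha\times 1)^*)$, which is the ideal of $H^*(BT\times T^m)$ generated by $p_1,\dots,p_{n-1}$ (the elementary symmetric polynomials in $x_1^2,\dots,x_n^2$) and $e=x_1\cdots x_n$. I will track the coefficient of $\bar a$ through the identity. In $a$ the coefficient is $2^{n-1}(n-4)!\neq 0$ from the stabilizer computation in the proof of Lemma \ref{a}. In every element of the ideal the coefficient is zero: any monomial of $p_i\cdot(\text{anything})$ contains some $x_k^2$, while $\bar a$ has each $x_k$ to power at most $1$; and any monomial of $e\cdot(\text{anything})$ is divisible by $x_1\cdots x_n$, while $\bar a$ omits $x_{n-3},x_{n-2},x_{n-1},x_n$. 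The core task is then to show that the coefficient of $\bar a$ in $L(y)$ is zero for every $y$, whence subtracting will contradict the nonvanishing of $2^{n-1}(n-4)!$.

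This core claim, and the main obstacle of the proof, reduces via the free commutative algebra structure of $H^*(\map_*(B\Z^m,BSO(2n))_0)$ (Proposition \ref{H(map)}) to showing that $\bar a$ does not appear as a monomial in any product $\prod_\ell L(z_{i_\ell,I_\ell})$ of generator lifts. Each monomial of $L(z_{i,I})$ with $i\le n-1$ has the form $x_k^{2i-|I|}y_k^I$, with all $y$-factors at a single index $k$, while each monomial of $L(z_{n,I})$ has the form $\prod_{k\notin S}x_k\cdot\prod_{k\in S}y_k^{j(k)}$ for some $S\subseteq[n]$ with $|S|=|I|$ and bijection $j\colon S\to I$, placing exactly one $y$ per index of $S$. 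The $x$- and $y$-multiplicities of $\bar a$ at each index will pin down the allowed factors tightly: for $k\le n-4$ the requirement $x_k^1$ without $y_k$ rules out $z_{i,I}$-factors $(i\le n-1)$ at $k$ and forces any Euler-type factor to have $k\notin S$; for $k\in\{n-3,n-2,n-1,n\}$ the absence of $x_k$ forces any Euler-type factor to have $k\in S$; consequently every Euler-type factor appearing has $S=\{n-3,n-2,n-1,n\}$, and any $z_{i,I}$-factor $(i\le n-1)$ that appears must be at $k=n$ with $|I|=2i=2$.

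A short enumeration will then finish the argument. The $y_{n-3}$-count, which is $1$ in $\bar a$, equals the number of Euler-type factors, so exactly one Euler-type factor $z_{n,J}$ with $|J|=4$ appears; this contributes a single $y_n$, so two more $y_n$'s are needed, forcing exactly one $z_{1,I'}$-factor with $I'\subset\{1,2,3\}, |I'|=2$ at $k=n$. The Euler bijection $j\colon\{n-3,n-2,n-1,n\}\to J$ is then compelled to satisfy $j(n-3)=1,j(n-2)=2,j(n-1)=3$ together with $j(n)\in\{1,2,3\}\setminus I'$, but injectivity of $j$ forces $j(n)\notin\{1,2,3\}$, a contradiction. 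The hard part will be this last enumeration, where the $(1,1,1,3)$ distribution of $y$'s of $\bar a$ across the four indices $n-3,n-2,n-1,n$ must be shown incompatible with both the single-index pattern of $L(z_{i,I})$-monomials $(i\le n-1)$ and the one-per-index pattern of the Euler-type monomials.
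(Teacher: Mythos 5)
Your proposal is correct, and it takes a genuinely different route from the paper. The paper proves the result in two steps: Lemma~\ref{a} shows $(\alpha\times 1)^*(a)$ is indecomposable in $H^*(SO(2n)/T\times T^m)^W$, and then the proof of Proposition~\ref{SO(2n)} observes that, by Lemma~\ref{Phi} and the freeness of $H^*(\map_*(B\Z^m,BSO(2n))_0)$ from Proposition~\ref{H(map)}, every element in the image of $\Phi^*\circ\Theta^*$ agrees modulo decomposables with a linear combination of the $\Phi^*\Theta^*(z_{i,I})$, each of which has $y$-degree at most $|I|\le m$; since $(\alpha\times 1)^*(a)$ is homogeneous of $y$-degree $6$ and indecomposable, this is impossible. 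Your argument instead lifts $\Phi^*\circ\Theta^*$ to a $W$-equivariant algebra map $L$ with target $H^*(BT\times T^m)^W$ and tracks the coefficient of the single monomial $\bar a$: nonzero in $a$ (the stabilizer count from Lemma~\ref{a}), zero in the ideal $\ker(\alpha\times 1)^*=(p_1,\dots,p_{n-1},e)$ because $\bar a$ has $x$-exponent $\le 1$ everywhere and omits $x_{n-3},\dots,x_n$, and zero in $L(y)$ by a careful enumeration using the shape of the generator lifts (single-index monomials for $i\le n-1$ and Euler-type monomials for $i=n$). This completely bypasses the indecomposability of $(\alpha\times 1)^*(a)$ and replaces the abstract decomposables/$y$-degree bookkeeping (and the implicit use of $\widetilde H^*(G/T)^W=0$ needed to make that bookkeeping rigorous) with an explicit combinatorial check. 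The trade-off is that your enumeration is substantially longer and requires the extra observation that the Euler-type lift is $W$-invariant because the Weyl group of $SO(2n)$ has only even sign changes; on the other hand it is more elementary, self-contained, and handles the $m>3$ case directly rather than by the retract argument used in the paper. One small remark: your use of the $z_{1,I'}$-factor to pin down $j(n)\in\{1,2,3\}$ is fine but slightly roundabout, since once $j(n-3)=1$, $j(n-2)=2$, $j(n-1)=3$ are forced and $j$ is injective, $j(n)\notin\{1,2,3\}$ already contradicts the fact that $\bar a$ contains no $y_n^j$ with $j\ge 4$.
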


\begin{proof}
  First, we consider the $m=3$ case. Suppose that there is $\hat{a}\in H^*(\map_*(B\Z^3,BSO(2n))_0)$ such that $(\alpha\times 1)^*(a)=\Phi^*(\Theta^*(\hat{a}))$. Then by Lemma \ref{a}, $\Phi^*(\Theta^*(\hat{a}))$ is indecomposable. On the other hand, by Lemma \ref{Phi}, we have $\Phi^*(\Theta^*(\hat{a}))=\Theta^*(\widehat{\Phi}^*(\hat{a}))=\widehat{\Phi}(\hat{a})$, and by Proposition \ref{H(map)}, every indecomposable element of the image of $\widehat{\Phi}^*$ cannot contain a monomial $x_1^{i_1}\cdots x_n^{i_n}y_1^{I_1}\cdots y_n^{I_n}$ with $|I_1|+\cdots+|I_n|>4$. Thus we obtain a contradiction, so $(\alpha\times 1)^*(a)$ does not belong to the image of $\Phi^*\circ\Theta^*$.

  Next, we consider the case $m>3$. Since $\Z^3$ is a direct summand of $\Z^m$, the maps $\widehat{\Phi}$ and $\Theta$ for $m=3$ are homotopy retracts of the maps $\widehat{\Phi}$ and $\Theta$ for $m>3$, respectively. Thus the $m=3$ case above implies the $m>3$ case, completing the proof.
\end{proof}

Now we are ready to prove Theorem \ref{main 2}.

\begin{proof}
  [Proof of Theorem \ref{main 2}]
  Combine Theorem \ref{Baird} and Proposition \ref{SO(2n)}.
\end{proof}

%\begin{proof}
%  [Proof of Corollary \ref{nilpotent}]
%  Let $m=\dim H_1(\pi,\Q)$. Note that the abelianization $\pi\to\Z^m$ yields a commutative diagram
%  \[
%    \xymatrix{
%      \Hom(\Z^m,G)_0\ar[r]\ar[d]\ar[r]^(.45)\Theta&\map_*(B\Z^m,BG)_0\ar[d]\\
%      \Hom(\pi,G)_0\ar[r]^(.45)\Theta&\map_*(B\pi,BG)_0.
%    }
%  \]
%  In \cite{BS}, the left map is proved to be a homotopy equivalence.
%\end{proof}

%%%%% Section 5 %%%%%

\section{Rational homotopy groups}\label{rational homotopy groups}

This section proves Theorem \ref{main 3}. We begin with a simple lemma. Let $\hur^*\colon H^*(X)\to\Hom(\pi_*(X),\Q)$ denote the dual Hurewicz map. As in the proof of Theorem \ref{nilpotent group}, let $QA$ denote the module of indecomposables of an augmented algebra $A$. We refer to \cite{FHT} for rational homotopy theory.

\begin{lemma}
  \label{n+1}
  Let $X$ be a simply-connected space such that there is a map
  \[
    X\to\prod_{i=2}^nK(V_i,i)
  \]
  which is a rational equivalence in dimension $\le n$, where $V_i$ is a $\Q$-vector space of finite dimension. Then for $i\le n+2$, the map
  \[
    \hur^*\colon QH^i(X)\to\Hom(\pi_i(X),\Q)
  \]
  is injective.
\end{lemma}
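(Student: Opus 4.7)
The plan is to pass to the minimal Sullivan model $(\Lambda V, d)$ of the rationalization of $X$ (see \cite{FHT}) and to identify $\hur^*$ with the ``linear part'' of that model. Since $V^i \cong \Hom(\pi_i(X), \Q)$ and, by minimality, $d$ carries $\Lambda V$ into $\Lambda^{\geq 2} V$, the quotient $\Lambda V \to V$ is a cochain map and induces a well-defined $\ell \colon H^i(X; \Q) \to V^i$ that sends a cohomology class to the $V^i$-component of any representative. This $\ell$ agrees with $\hur^*$ under $\Hom(\pi_i(X),\Q) \cong V^i$, and since it vanishes on $H^+ \cdot H^+$, it factors as the map $\hur^* \colon QH^i(X) \to \Hom(\pi_i(X), \Q)$ in the statement.

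The first step I would carry out is to exploit the hypothesis to choose generators for the minimal model so that the low-degree generators are cocycles. The given map $X \to \prod_{i=2}^n K(V_i, i)$ induces a map of minimal Sullivan models $\phi \colon (\Lambda W, 0) \to (\Lambda V, d)$ with $W = \bigoplus_{i=2}^n V_i^*$; the hypothesis forces $\phi$ to induce an isomorphism on generators in every degree $\le n$. For each $v \in V^i$ with $i \le n$, the corresponding $\phi(w) \in (\Lambda V)^i$ is a cocycle of the form $v + \omega$ with $\omega \in \Lambda^{\geq 2} V$, and a standard change of generators replaces $v$ by $v + \omega$; this yields an isomorphic minimal model in which $d|_{V^i} = 0$ for every $i \le n$.

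With such a choice in hand, the injectivity of $\hur^*$ on $QH^i(X)$ for $i \le n+2$ follows from a direct monomial count. Given $\xi \in \ker(\hur^*) \subset QH^i(X)$, lift it to $[\alpha] \in H^i(X; \Q)$; since the linear part of $\alpha$ vanishes, I may take $\alpha \in \Lambda^{\geq 2} V$. Expanding $\alpha$ as a sum of monomials $v_{j_1} \cdots v_{j_k}$ with $k \ge 2$ and each $v_{j_l} \in V$ of degree $\ge 2$ (by simple connectivity), the identity $\sum_l |v_{j_l}| = i \le n+2$ forces $|v_{j_l}| \le i - 2 \le n$ for every factor. By the first step each such factor is a cocycle, so every monomial represents a product of cohomology classes in $H^+ \cdot H^+$; therefore $[\alpha] \in H^+ \cdot H^+$ and $\xi = 0$.

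The main subtlety lies in the change-of-generators step: one must verify that the replacement really produces an equivalent Sullivan model and that the linear part is then computed using the new generators. The degree bound $i \le n+2$ is sharp, because at $i = n+3$ a quadratic monomial could contain a factor in degree $n+1$ whose differential the hypothesis does not control.
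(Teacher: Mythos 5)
Your proof is correct and follows essentially the same route as the paper's: pass to the minimal Sullivan model, observe that simple connectivity forces every generator to have degree $\geq 2$, use the hypothesis to arrange that the degree-$\leq n$ generators are cocycles, and then conclude that any cocycle in $\Lambda^{\geq 2}V$ of degree $\leq n+2$ is a sum of products of cocycles and hence decomposable in cohomology. The paper compresses this into a few lines, tacitly assuming the minimal model already has $d=0$ on $V^{\leq n}$; you are more careful in making the change of generators explicit and in noting why the bound $n+2$ is exactly what the monomial count yields.
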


\begin{proof}
  The minimal model of $X$ in dimension $\le n$ is given by
  \[
    (\Lambda(V_2\oplus\cdots\oplus V_n),d=0)
  \]
  where $\Lambda V$ denotes the free commutative graded algebra generated by a graded vector space $V$ and each $V_i$ is of degree $i$. Then there is no element of degree one in the minimal model of $X$, so any element of $QH^i(X)$ for $i\le n+2$ is represented by an indecomposable element of the minimal model of $X$. Since the module of indecomposables of the minimal model of $X$ is isomorphic to $\Hom(\pi_*(X),\Q)$ through the dual Hurewicz map, the proof is finished.
\end{proof}

We recall a property of the minimal generating set $\mathcal{S}(m,G)$ that we are going to use. Let
\[
  d(m,G)=
  \begin{cases}
    2n-m&G=U(n),SU(n)\\
    2n+1&G=Sp(n),SO(2n+1).
  \end{cases}
\]
Let $\Q\{S\}$ denote the graded $\Q$-vector space generated by a graded set $S$. We consider a map
\[
  \lambda=\prod_{x\in\mathcal{S}(m,G)}x\colon\Hom(\Z^m,G)_0\to \prod_{x\in\mathcal{S}(m,G)}K(\Q,|x|).
\]
The following is proved in \cite{KT1}.

\begin{theorem}
  \label{low dimension}
  Let $G$ be the classical group except for $SO(2n)$. Then the map
  \[
    \lambda^*\colon\Lambda(\Q\{\mathcal{S}(m,G)\})\to H^*(\Hom(\Z^m,G)_0)
  \]
  is an isomorphism in dimension $\le d(m,G)$.
\end{theorem}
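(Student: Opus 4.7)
The plan is to exploit the surjectivity of $\lambda^*$ supplied by Theorem~\ref{generator} and reduce the injectivity claim to a Hilbert series comparison through degree $d(m,G)$. By Theorem~\ref{generator}, $\mathcal{S}(m,G)$ is a minimal generating set of $H^*(\Hom(\Z^m,G)_0)$, so $\lambda^*$ is surjective in every degree; it therefore suffices to verify that $\Lambda(\Q\{\mathcal{S}(m,G)\})$ and $H^*(\Hom(\Z^m,G)_0)$ have the same graded $\Q$-dimension in each degree $\le d(m,G)$, as equality of dimensions together with surjectivity forces an isomorphism.

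The Hilbert series of the source $\Lambda(\Q\{\mathcal{S}(m,G)\})$ is an immediate product of factors $(1-t^{|z|})^{-1}$ (for $|z|$ even) or $(1+t^{|z|})$ (for $|z|$ odd), indexed by $z\in\mathcal{S}(m,G)$, so it is explicit. For the target, I would use Theorem~\ref{Baird} to identify $H^*(\Hom(\Z^m,G)_0)$ with $H^*(G/T\times T^m)^W$, where $H^*(G/T)=\Q[x_1,\ldots,x_n]/(\widetilde H^*(BT)^W)$, and compute its Hilbert series by a Molien-type formula. For $G=U(n),SU(n)$ this is an $S_n$ computation, and for $G=Sp(n),SO(2n+1)$ it is a hyperoctahedral-group computation; in either case the answer can be put in closed form using $q$-analogues and then compared term by term with the product above.

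The heart of the argument is the combinatorial identity that these two Hilbert series agree through degree $d(m,G)$. Equivalently, every polynomial relation among the elements of $\mathcal{S}(m,G)$ inside $H^*(\Hom(\Z^m,G)_0)$ must sit in degree strictly greater than $d(m,G)$. The smallest such relation is expected to come from the $W$-symmetrization of a ``next'' generator $z(d,I)$ (respectively $w(d,I)$) just outside $\mathcal{S}(m,G)$, that is, one failing $d+|I|-1\le n$ (respectively $2d+|I|+\epsilon(|I|)-2\le 2n$). A direct degree inspection shows any such excluded generator has degree $\ge 2n-m+2>d(m,U(n))$ in the unitary case and $\ge 2n+2>d(m,Sp(n))$ in the symplectic or odd-orthogonal case, which matches the threshold in the statement. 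The main obstacle is making this bookkeeping rigorous inside the mixed polynomial--exterior invariant ring, carefully handling the parity-sensitive factor $\epsilon(|I|)$ and the interaction between the pure $x$-relations $\widetilde H^*(BT)^W=0$ and the mixed generators when forming symmetrizations; once this is controlled, combining the degree bound with the Hilbert series comparison and the surjectivity from Theorem~\ref{generator} yields the claim.
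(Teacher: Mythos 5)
The paper does not prove Theorem~\ref{low dimension} itself; it is quoted from \cite{KT1}. Your reduction is sound as far as it goes: by Theorem~\ref{generator} the map $\lambda^*$ is surjective, so injectivity through degree $d(m,G)$ would indeed follow from equality of Hilbert series through that degree. But the rest of the proposal stops short of a proof.

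Two gaps are genuine. First, the Molien-type computation of the Hilbert series of $H^*(G/T\times T^m)^W$ is not carried out; this is a $W$-average over a tensor product of the \emph{truncated} ring $H^*(G/T)=\Q[x_1,\dots,x_n]/(\widetilde H^*(BT)^W)$ with $m$ exterior algebras, and the truncation makes the averaging nontrivial. Asserting that it "can be put in closed form using $q$-analogues" is not the same as exhibiting the closed form and matching it term by term with the product for $\Lambda(\Q\{\mathcal{S}(m,G)\})$. Second, and more seriously, the heuristic that the first relation "is expected to come from the $W$-symmetrization of a next generator just outside $\mathcal{S}(m,G)$" is unjustified. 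Minimality of $\mathcal{S}(m,G)$ only says the excluded $z(d,I)$ or $w(d,I)$ are decomposable; it gives no control over the degree of the first minimal relation in the kernel of $\lambda^*$. A low-degree relation could perfectly well arise from an algebraic dependence among products of elements of $\mathcal{S}(m,G)$ themselves, created by the ideal $\widetilde H^*(BT)^W$ that is killed in $H^*(G/T)$; nothing in your argument rules this out. Your degree bound on the excluded generators (which is correct: $\ge 2n-m+2$ in the unitary case and $\ge 2n+2$ in the symplectic/odd-orthogonal case) therefore does not bound the degree of the first relation. You flag this as "the main obstacle" to be made rigorous, but this obstacle \emph{is} the theorem: the surjectivity and the dimension-count reduction are routine, and all the content lies in the comparison you defer.
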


We define a map $\hur^*\colon\mathcal{S}(m,G)\to\Hom(\pi_*(\Hom(\Z^m,G)_0),\Q)$ by the linear part of the map $\lambda$ in the minimal models.

\begin{lemma}
  \label{generator pi(G)}
  If $G$ is the classical group except for $SO(2n)$, then the map
    \[
      \hur^*\colon\Q\{\mathcal{S}(m,G)\}\to\Hom(\pi_*(\Hom(\Z^m,G)_0),\Q)
    \]
    is injective in dimension $\le d(m,G)+2$.
\end{lemma}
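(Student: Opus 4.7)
The plan is to reduce the statement to Lemma~\ref{n+1}. First, using Theorem~\ref{generator}, which says that $\mathcal{S}(m,G)$ is a minimal generating set of $H^*(\Hom(\Z^m,G)_0;\Q)$, I would identify $\Q\{\mathcal{S}(m,G)\}$ degreewise with $QH^*(\Hom(\Z^m,G)_0;\Q)$ via the natural projection $\Q\{\mathcal{S}(m,G)\}\hookrightarrow H^*(\Hom(\Z^m,G)_0;\Q)\twoheadrightarrow QH^*(\Hom(\Z^m,G)_0;\Q)$. Unwinding the definitions of $\lambda$ and $\hur^*$ (the linear part of $\lambda$ in the minimal models), one sees that under this identification $\hur^*$ coincides with the standard dual Hurewicz map $QH^*(\Hom(\Z^m,G)_0;\Q)\to\Hom(\pi_*(\Hom(\Z^m,G)_0),\Q)$. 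It thus suffices to show that this dual Hurewicz map is injective in degrees $\le d(m,G)+2$.

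For $G=SU(n),Sp(n),SO(2n+1)$, I would next check that the rationalization of $\Hom(\Z^m,G)_0$ is simply-connected: every element of $\mathcal{S}(m,G)$ has degree $\ge 2$, so by Theorem~\ref{generator} we have $H^1(\Hom(\Z^m,G)_0;\Q)=0$, and $\pi_1$ is abelian by~\cite{GPS}. Theorem~\ref{low dimension} asserts that $\lambda^*$ is an isomorphism on rational cohomology in degrees $\le d(m,G)$, which the rational Whitehead theorem upgrades to the statement that $\lambda$ is a rational equivalence in dimensions $\le d(m,G)$. Applying Lemma~\ref{n+1} with $n=d(m,G)$ then yields the required injectivity.

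For $G=U(n)$, the space $\Hom(\Z^m,U(n))_0$ fails to be rationally simply-connected, so I would reduce to the $SU(n)$ case via the covering $U(1)\times SU(n)\to U(n)$. Lemma~\ref{covering} provides a rational equivalence $\Hom(\Z^m,U(n))_0\simeq_\Q T^m\times\Hom(\Z^m,SU(n))_0$, under which the generating set decomposes as $\mathcal{S}(m,U(n))=\{z(1,\{k\})\}_{k=1}^m\sqcup\mathcal{S}(m,SU(n))$, with the degree 1 classes corresponding to generators of $H^1(T^m;\Q)$ and the remaining classes to those of the $SU(n)$ factor. On the degree 1 part, $\hur^*$ reduces to the standard isomorphism $H^1(T^m;\Q)\cong\Hom(\pi_1(T^m),\Q)$, and on the higher-degree generators the preceding paragraph applies to $\Hom(\Z^m,SU(n))_0$; since $d(m,U(n))=d(m,SU(n))=2n-m$, the degree bounds match. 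The main technical point to verify here is the compatibility of the splitting of $\mathcal{S}(m,U(n))$ with the rational product decomposition, so that $\hur^*$ can be analyzed factor by factor; this is the principal obstacle, together with the careful application of the rational Whitehead theorem to secure the hypothesis of Lemma~\ref{n+1}.
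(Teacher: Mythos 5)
Your proposal follows essentially the same route as the paper: identify $\Q\{\mathcal{S}(m,G)\}$ with $QH^*$ via Theorem~\ref{generator}, use Theorem~\ref{low dimension} plus the rational Whitehead theorem to make $\lambda$ a rational equivalence in dimensions $\le d(m,G)$, apply Lemma~\ref{n+1}, and handle $U(n)$ through the covering $S^1\times SU(n)\to U(n)$. The only place you diverge is the $SO(2n+1)$ case, and that is where a small gap appears: Lemma~\ref{n+1} as stated requires a simply-connected space, and $\Hom(\Z^m,SO(2n+1))_0$ is not simply-connected (its $\pi_1$ is $(\Z/2)^m$). Your argument via $H^1(\,\cdot\,;\Q)=0$ and $\pi_1$ abelian only establishes that the rationalization \emph{would be} simply-connected, which presupposes the space is nilpotent so that a rationalization exists; this is not verified. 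The paper sidesteps exactly this by invoking Lemma~\ref{covering} to replace $SO(2n+1)$ by $Spin(2n+1)$, for which $\Hom(\Z^m,Spin(2n+1))_0$ is honestly simply-connected by \cite{GPS}, and then transferring the conclusion back since the covering is a rational cohomology and rational homotopy isomorphism. Adding this one step would close the gap. (Also, a small notational slip: in the $U(n)$ reduction you write $T^m$ for the first factor; since $T$ elsewhere denotes the maximal torus of $G$, this should read $(S^1)^m$, matching $\Hom(\Z^m,S^1\times SU(n))_0=(S^1)^m\times\Hom(\Z^m,SU(n))_0$.)
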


\begin{proof}
  By \cite{GPS}, $\Hom(\Z^m,G)_0$ is simply-connected whenever $G$ is simply-connected. Then by Lemma \ref{covering}, we may assume $\Hom(\Z^m,G)_0$ is simply-connected for $G=SU(n),Sp(n),SO(2n+1)$ as long as we consider rational cohomology and rational homotopy groups. By Theorem \ref{low dimension}, the map $\lambda$ is an isomorphism in rational cohomology in dimension $\le d(m,G)$. Then the statement for $G=SU(n),Sp(n),SO(2n+1)$ is proved by the J.H.C. Whitehead theorem and Lemma \ref{n+1}. For $G=U(n)$, we may consider $S^1\times SU(n)$ by Lemma \ref{covering}, instead of $U(n)$. In this case, the dual Hurewicz map for $G=U(n)$ is identified with the map
  \[
    1\times\hur^*\colon\Q^m\times\Q\{\mathcal{S}(m,SU(n))\}\to\Q^m\times\Hom(\pi_*(\Hom(\Z^m,SU(n))_0),\Q)
  \]
  because $\Hom(\Z^m,S^1\times SU(n))_0=(S^1)^m\times\Hom(\Z^m,SU(n))_0$. Thus the statement follows from the $G=SU(n)$ case.
\end{proof}

\begin{lemma}
  \label{generator pi(U)}
  For $G=U(n),SU(n)$, the map
    \[
      \hur^*\colon\Q\{\mathcal{S}(m,G)\}\to\Hom(\pi_*(\Hom(\Z^m,G)_0),\Q)
    \]
    is injective.
\end{lemma}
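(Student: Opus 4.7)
The plan is to identify $\hur^*$ with the composition of two injective maps: the isomorphism $\Q\{\mathcal{S}(m,G)\}\xrightarrow{\sim}QH^*(\Hom(\Z^m,G)_0;\Q)$ from Theorem \ref{generator} (a minimal generating set is a basis of indecomposables), followed by the dual Hurewicz map restricted to indecomposables. In particular, the dimensional restriction of Lemma \ref{n+1} used in Lemma \ref{generator pi(G)} can be avoided by replacing the Whitehead-theoretic input with a purely minimal-model input.

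For $G=SU(n)$, I would proceed as follows. Since $SU(n)$ is simply connected, $\Hom(\Z^m,SU(n))_0$ is simply connected by \cite{GPS} and has finite $\Q$-type by Theorem \ref{Baird}. In the minimal Sullivan model $(\Lambda V,d)$ of a simply connected space $X$ of finite $\Q$-type, any indecomposable cohomology class has a cocycle representative whose ``linear part'' in $V\cong\Hom(\pi_*(X),\Q)$ is independent of the choice, yielding a well-defined map $QH^*(X;\Q)\to\Hom(\pi_*(X),\Q)$. This map is injective: a cocycle whose linear part vanishes lies in $\Lambda^{\ge 2}V$ and thus represents a decomposable class. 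Since the paper's $\hur^*$ is defined as the linear part of $\lambda$ in the minimal models, unwinding the definitions shows that it factors as the composition
\[
\Q\{\mathcal{S}(m,SU(n))\}\xrightarrow{\ \lambda^*\ }QH^*(\Hom(\Z^m,SU(n))_0;\Q)\hookrightarrow\Hom(\pi_*(\Hom(\Z^m,SU(n))_0),\Q),
\]
an isomorphism followed by an injection, hence is itself injective.

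For $G=U(n)$, I would then argue exactly as in the proof of Lemma \ref{generator pi(G)}: Lemma \ref{covering} applied to the finite covering $S^1\times SU(n)\to U(n)$ gives a rational equivalence
\[
\Hom(\Z^m,U(n))_0\simeq_\Q(S^1)^m\times\Hom(\Z^m,SU(n))_0,
\]
while $\mathcal{S}(m,U(n))=\{z(1,\{i\})\}_{i=1}^m\sqcup\mathcal{S}(m,SU(n))$ splits in the obvious way. Under this decomposition $\hur^*$ for $U(n)$ becomes the direct sum of the identity on $\Q^m$ (coming from the degree-$1$ generators attached to the $(S^1)^m$-factor) and $\hur^*$ for $SU(n)$; both summands are injective, so the total is injective. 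The only nontrivial step is the full-range injectivity of the dual Hurewicz on indecomposables, which is standard in rational homotopy theory, so I do not anticipate any serious obstacle beyond carefully unwinding the identification of the two definitions of $\hur^*$.
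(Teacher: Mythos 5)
Your reduction for $G=U(n)$ via the finite covering $S^1\times SU(n)\to U(n)$ is fine and agrees with the paper's proof of Lemma \ref{generator pi(G)}, but the key step for $G=SU(n)$ has a genuine gap. You claim that for any simply connected space $X$ of finite $\Q$-type, the dual Hurewicz map $QH^*(X;\Q)\to\Hom(\pi_*(X),\Q)$ is injective, on the grounds that a cocycle with vanishing linear part ``lies in $\Lambda^{\ge 2}V$ and thus represents a decomposable class.'' This inference is false: an element of $\Lambda^{\ge 2}V$ is a sum of products of elements of $V$, not of cocycles, so its cohomology class need not be decomposable. Concretely, let $(\Lambda(a_2,b_3,c_3,e_4),d)$ be the minimal Sullivan algebra with $da=dc=0$, $db=a^2$, $de=ac$, and let $X$ be a rational realization. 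Then $z=bc-ae$ is a cocycle with zero linear part, and $[z]$ spans $H^6(X)\cong\Q$; since $H^4(X)=0$ and $c^2=0$, the decomposable subspace of $H^6(X)$ is zero, so $[z]$ is indecomposable. Yet $\pi_6(X)\otimes\Q\cong V^6=0$, so $\hur^*\colon QH^6(X)\to\Hom(\pi_6(X),\Q)$ is the zero map on a one-dimensional source. This is exactly why the paper's Lemma \ref{n+1} is stated only in the range $i\le n+2$, where the generators of the minimal model contribute freely to cohomology.

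The paper resolves this not by strengthening Lemma \ref{n+1} but by an induction on $m$: pulling back along the projections $\pi_I^*\colon\Hom(\Z^{|I|},G)_0\to\Hom(\Z^m,G)_0$ for proper $I\subsetneq[m]$ kills precisely the generators $z(d,J)$ with $J\not\subset I$, so the inductive hypothesis forces all coefficients with $J\ne[m]$ to vanish, and the surviving generators $z(d,[m])$ have degree at most $2n-m=d(m,G)$, which lies in the range $\le d(m,G)+2$ covered by Lemma \ref{generator pi(G)} (hence by Lemma \ref{n+1}). The spaces $\Hom(\Z^m,G)_0$ are rationally hyperbolic and their cohomology has relations above degree $d(m,G)$, so full-range injectivity of the dual Hurewicz on indecomposables is precisely the kind of thing one should not expect, and your proposal would need an additional argument specific to these spaces that it does not supply.
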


\begin{proof}
  Let $G=U(n),SU(n)$. We induct on $m$. If $m=1$, then the statement is obvious. Assume that the statement holds less than $m$. Take any $\emptyset\ne I\subset[m]$. Then there are the obvious inclusion $\iota_I\colon\Z^{|I|}\to\Z^m$ and the obvious projection $\pi_I\colon\Z^m\to\Z^{|I|}$ such that $\pi_I\circ\iota_I=1$. In particular, we get maps $\iota_I^*\colon\Hom(\Z^m,G)_0\to\Hom(\Z^{|I|},G)_0$ and $\pi_I^*\colon\Hom(\Z^{|I|},G)_0\to\Hom(\Z^m,G)_0$ such that $\iota_I^*\circ\pi_I^*=1$. Note that the map $\pi_I^*$ induces a map $(\pi_I^*)^*\colon\mathcal{S}(m,G)\to\mathcal{S}(|I|,G)$ such that
  \begin{equation}
    \label{top}
    (\pi_I^*)^*(z(d,J))=
    \begin{cases}
      z(d,J)&J\subset I\\
      0&J\not\subset I.
    \end{cases}
  \end{equation}
  Then there is a commutative diagram
  \begin{equation}
    \label{hur}
    \xymatrix{
      \Q\{\mathcal{S}(m,G)\}\ar[r]^{(\pi_I^*)^*}\ar[d]_{\hur^*}&\Q\{\mathcal{S}(|I|,G)\}\ar[d]^{\hur^*}\\
      \Hom(\pi_*(\Hom(\Z^m,G)_0),\Q)\ar[r]^{((\pi_I^*)_*)^*}&\Hom(\pi_*(\Hom(\Z^{|I|},G)_0),\Q).
    }
  \end{equation}
  Now we assume
  \[
    \sum_{z(d,J)\in\mathcal{S}(m,G)}a_{d,J}\hur^*(z(d,J))=0
  \]
  for $a_{d,J}\in\Q$. Then by \eqref{top} and \eqref{hur}, we have
  \begin{align*}
    0&=((\pi_I^*)_*)^*\left(\sum_{z(d,J)\in\mathcal{S}(m,G)}a_{d,J}\hur^*(z(d,J))\right)\\
    &=\sum_{z(d,J)\in\mathcal{S}(m,G)}a_{d,J}\hur^*(\pi_I^*)^*(z(d,J)))\\
    &=\sum_{\substack{z(d,J)\in\mathcal{S}(m,G)\\J\subset I}}a_{d,J}\hur^*(z(d,J))\\
    &=\sum_{z(d,J)\in\mathcal{S}(|I|,G)}a_{d,\iota_I(J)}\hur^*(z(d,J)).
  \end{align*}
  So since the right map of \eqref{hur} is injective for $I\ne[m]$ by the induction hypothesis, we get $a_{d,J}=0$ for $J\ne[m]$, implying
  \[
    \sum_{z(d,[m])\in\mathcal{S}(m,G)}a_{d,[m]}\hur^*(z(d,[m]))=0.
  \]
  Note that every $z(d,[m])\in\mathcal{S}(m,G)$ is of degree $\le 2n-m+1$. Then by Lemma \ref{generator pi(G)}, we get $a_{d,[m]}=0$, completing the proof.
\end{proof}

Now we prove Theorem \ref{main 3}.

\begin{proof}
  [Proof of Theorem \ref{main 3}]
  Let $\mathcal{S}_i$ and $\mathcal{S}_i(m,G)$ denote the degree $i$ parts of $\mathcal{S}$ and $\mathcal{S}(m,G)$, respectively, where $\mathcal{S}$ is as in Proposition \ref{H(map)}. Then by Proposition \ref{H(map)} and Theorem \ref{low dimension}, there is a commutative diagram
  \[
    \xymatrix{
      \Q\{\mathcal{S}_i\}\ar[r]^(.45){\Theta^*}\ar[d]_{\hur^*}&\Q\{\mathcal{S}_i(m,G)\}\ar[d]^{\hur^*}\\
      \Hom(\pi_i(\map_*(B\Z^m,BG)_0),\Q)\ar[r]^(.53){(\Theta_*)^*}&\Hom(\pi_i(\Hom(\Z^m,G)_0),\Q).
    }
  \]
  Let $K_i$ denote the kernel of the bottom map. Clearly, the dimension of $K_i$ coincides with
  \[
    \dim\mathrm{Coker}\{\Theta_*\colon\pi_i(\Hom(\Z^m,G)_0)\otimes\Q\to\pi_*(\map_i(B\Z^m,BG)_0)\otimes\Q\}
  \]
  and so we compute $\dim K_i$. By Proposition \ref{H(map)}, the left map is an isomorphism. Then we get
  \[
    \dim K_i\ge\dim \Q\{\mathcal{S}_i\}-\dim \Q\{\mathcal{S}_i(m,G)\}.
  \]
  By Lemma \ref{generator pi(G)}, the equality holds for $G=Sp(n),SO(2n+1)$ and $i\le d(m,G)+2$, and by Lemma \ref{generator pi(U)}, the equality holds for $G=U(n),SU(n)$ and all $i$. We can easily compute
  \[
    \dim \Q\{\mathcal{S}_i\}-\dim \Q\{\mathcal{S}_i(m,G)\}=
    \begin{cases}
      \displaystyle\sum_{i<k\le n}\binom{m}{2k-i}&G=U(n),SU(n)\\
      \displaystyle\sum_{\frac{i}{3}<k\le n}\binom{m}{4k-i}&G=Sp(n),SO(2n+1)
    \end{cases}
  \]
  and thus the proof is finished.
\end{proof}

\end{document}